\def\[{\begin{equation}}
\def\]{\end{equation}}
\newcommand{\ii}{\mathbf{i}}
\newcommand{\jj}{\mathbf{j}}
\newcommand{\kk}{\mathbf{k}}
\numberwithin{equation}{section}
\begin{document}

\graphicspath{{./FIG/}}


\title{A metric function for dual quaternion matrices and related least-squares problems}

\titlerunning{A metric function for dual quaternion matrices and related least-squares problems}     

\author{Chen Ling \and Chenjian Pan \and Liqun Qi
}



\institute{C. Ling \at
Department of Mathematics, Hangzhou Dianzi University, Hangzhou, 310018, China.\\
\email{macling@hdu.edu.cn}
\and   C.J. Pan\at School of Mathematics and Statistics, Ningbo University,
Ningbo, 315211, China. \\
\email{panchenj@163.com}
\and L.Q. Qi \at Department of Applied Mathematics, The Hong Kong Polytechnic University, Hung Hom,
    Kowloon, Hong Kong, China \\ \email{maqilq@polyu.edu.hk}
}

\date{Received: date / Accepted: date}

\maketitle

\begin{abstract}
Solving dual quaternion equations is an important issue in many fields such as scientific computing and engineering applications. In this paper, we first introduce a new metric function for dual quaternion matrices. Then, we reformulate dual quaternion overdetermined equations as a least squares problem, which is further  converted into a bi-level optimization problem. Numerically, we propose two implementable proximal point algorithms for finding approximate solutions of dual quaternion overdetermined equations. The relevant convergence theorems 
have also been established. Preliminary simulation results on synthetic and color image datasets demonstrate the effectiveness of the proposed algorithms.

\keywords{Quaternion \and dual quaternion matrix\and metric function\and least squares problem\and Jacobi-proximal
ADMM algorithm\and convergence.}



\end{abstract}
\section{Introduction}\label{Sect1-Introd}

Quaternions, which were introduced by the Irish mathematician Hamilton \cite{Ha43} in 1843, are extensions of complex numbers.  A quaternion has the form $u = u_0 + u_1\ii + u_2\jj + u_3\kk$, where $u_0, u_1, u_2,u_3$ are real numbers, $\ii, \jj$ and $\kk$ are three imaginary units of quaternions, satisfying $\ii^2 = \jj^2 = \kk^2 =\ii\jj\kk = -1$, $\ii\jj = -\jj\ii = \kk$, $\jj\kk = - \kk\jj = \ii$ and $\kk\ii = -\ii\kk = \jj$. The multiplication of quaternions satisfies the distribution law, but is noncommutative. Nowadays, quaternions have been widely applied in many engineering fields such as quantum mechanics and color image processing, e.g., see \cite{DSR23,EBS14,FJSS62,Gan18,JNS19,QLWZ22,Sa96,ZKW16} and reference therein.

Dual quaternions were introduced by Clifford \cite{Cl73} in 1873, and have become one of the core knowledge of Clifford algebra or geometric algebra. 
A dual quaternion has the form $\dot{q} = q_{\rm st} + q_{\rm in}\epsilon$, where $q_{\rm st}, q_{\rm in}$ are quaternions, which are the standard part and the infinitesimal part of $\dot{q}$, respectively.  Here, $\epsilon$ is the dual unit subjected to the rules $\epsilon\neq 0$, $0\epsilon=\epsilon 0=0$, $1\epsilon =\epsilon1 =\epsilon$ and $\epsilon^2=0$. 
In mechanics, the dual quaternions are applied as a number system to represent rigid transformations in three dimensions. Similar to the way that rotations in 3D space can be represented by quaternions of unit length, rigid motions in 3D space can be represented by dual quaternions of unit length. Because dual quaternions can be used to represent coordinately a combination of rigid body's rotation and displacement, they have many applications in engineering fields, such as 3D computer graphics, robotics control and computer vision \cite{BLH19,CKJC16,Da99,MKO14,MXXLY18,TRA11,WYL12,WZ14}. Dual quaternions employed as a novel data structure are ubiquitous across a broad range of fields from kinematics and statics to dynamics \cite{Fis17}.  
The study of dual quaternion matrices and their applications in formation control in 3D space can be traced back to the Ph.D. thesis of X. Wang \cite{Wx11} in 2011. In \cite{WYZ}, Wang, Yu and Zheng studied several dual quaternion matrices in multi-agent formation control. Recently, Qi et al used dual quaternion matrices to study undirected and directed gain graph and (multi-agent) formation control \cite{Qi-Cui24,QC24,QWL23}. In addition, by utilizing dual quaternion matrix theory, many scholars also studied various  engineering problems such as hand-eye calibration \cite{CLQY24,Qi23} and simultaneous localization and mapping (SLAM) \cite{CCHQ24}. Considering the wide application background of dual quaternion matrices, more researchers pay attention to the fundamental theoretical properties of dual quaternion matrices, including the SVD \cite{QiLu23}, singular values and low rank approximations \cite{LHQ22}, minimax principle and generalized inverses \cite{LQY23}, spectral norm and trace \cite{LHQF24}, determinant \cite{CQSW24}, and algorithms for solving eigenvalues of dual quaternion Hermitian matrices \cite{CQ24}.

The least squares problem is a class of important methods for solving approximate solutions of overdetermined equations (in this case, we cannot obtain an exact solution to these equations), and plays a key role in engineering fields such as compressive sensing, image processing, and data recovery. Although quaternion multiplication does not satisfy commutative laws, its least squares problem still possesses some important properties similar to classical least squares problems. For example, let $A$ and $B$ be the quaternion matrices of appropriate size, consider the quaternion least squares problem in the following form:
\begin{equation}\label{QSLP-Prob1}
\displaystyle\mathop{\rm min}\limits_{X\in \mathbb{Q}^{n\times p}}~\displaystyle\frac{1}{2}\|AX-B\|_F^2,
\end{equation}
where $\mathbb{Q}^{n\times p}$ denotes the set of all $n\times p$ quaternion matrices. Its solution set can be expressed as
\begin{equation}\label{QSLP--solution}
\mathcal{X}^\diamond=\big\{X=A^\dag B+(I_n-A^\dag A)Z~:~Z\in \mathbb{Q}^{n\times p}\big\},
\end{equation}
where $A^\dag$ denotes the Moore-Penrose inverse of the quaternion matrix $A$ (see Theorem 3.2.1 and Definition 1.6.1 in \cite{WLZZ18}). By combining the quaternion least squares method with some related matrix factorization or low-rank/sparse regularization techniques, effective algorithms can be designed for color image and video data processing problems. Many algorithms have been widely applied in the engineering and scientific computing fields, as shown in \cite{WWF08,YBM08,ZWLZ15} and reference therein. Just as the dual numbers are applied to large-scale brain functional magnetic resonance imaging (MRI) data processing \cite{WDW24}, in many real applications, the infinitesimal parts of dual quaternions can be regarded as perturbations or noise to their standard parts as well as derivatives concerning time for time series data. This excellent property provides us with the possibility to use dual quaternion matrices to express relevant data and to design more effective algorithms, among which how to effectively solve the dual quaternion least squares problem is a key step. Due to the different priority given to the standard and infinitesimal parts of dual numbers in their total order definition, solving dual quaternion least squares problem is a challenge. In this paper, we first introduce a new metric function for dual quaternion matrices. Then, we gainfully employ the new metric function to formulate the dual quaternion overdetermined equation as a least squares problem, which will be further converted into an equivalent bi-level optimization problems. Numerically, we accordingly introduce two implementable proximal point algorithms to find numerical solutions of the problem under consideration. Some computational results on synthetic and color image datasets support the ideas of this paper.

The rest of the paper is organized as follows. Following a brief recalling of dual numbers, quaternions, and dual quaternions in Section \ref{Sect2-Prelim}, a dual quaternion matrix
metric function is introduced in Section \ref{Metric Fun}. In Section \ref{Sect4-0}, we consider the approaches for solving two dual quaternion least squares problems. Based upon converting the involved problems into {equivalent} bi-level optimization models,  two proximal (alternating) minimization algorithms are proposed. Moreover, the convergence of the proposed algorithms {is} analyzed in Section \ref{Conanalysis}. In Section \ref{NumTest}, we conduct simulation experiments to evaluate the performance of the proposed approaches. Finally, we give some concluding remarks to complete this paper in Section \ref{FinalRe}.

\section{Preliminaries}\label{Sect2-Prelim}

\subsection{Dual numbers}\label{Sect2.1}
Denote by $\dot{\mathbb R}$ the set of dual numbers. A dual number $\dot{q}\in \dot{\mathbb{R}}$ has the form $\dot{q} = q_{\rm st} + q_{\rm in}\epsilon$, where $q_{\rm st},q_{\rm in}\in \mathbb{R}$. 
We call $q_{\rm st}$ the standard part of $\dot{q}$, and $q_{\rm in}$ is the infinitesimal part of $\dot{q}$.  The infinitesimal unit $\epsilon$ is commutative in multiplication with real numbers. 
 If $q_{\rm st} \not = 0$, we say that $\dot{q}$ is appreciable; otherwise, we say that $\dot{q}$ is infinitesimal. In \cite{QLY21}, a total order  for dual numbers was introduced. For $\dot{p} = p_{\rm st} + p_{\rm in}\epsilon, \dot{q} = q_{\rm st} + q_{\rm in}\epsilon \in \dot{\mathbb R}$, we have $\dot{q} < \dot{p}$ if $q_{\rm st} < p_{\rm st}$, or $q_{\rm st} = p_{\rm st}$ and $q_{\rm in} < p_{\rm in}$.  
 Thus, if $\dot{q} > 0$, we say that $\dot{q}$ is a positive dual number; and if $\dot{q} \ge 0$, we say that $\dot{q}$ is a nonnegative dual number. 
For given $\dot{p} = p_{\rm st} + p_{\rm in}\epsilon, \dot{q} = q_{\rm st} + q_{\rm in}\epsilon \in \dot{\mathbb R}$, we denote
$\dot{p} + \dot{q} =p_{\rm st}+q_{\rm st} +(p_{\rm in}+q_{\rm in})\epsilon$ and $\dot{p}\dot{q} = p_{\rm st}q_{\rm st} +(p_{\rm st}q_{\rm in}+p_{\rm in} q_{\rm st})\epsilon$.
The absolute value \cite{QLY21} of $\dot{q} \in \dot{\mathbb R}$ is defined by
\begin{equation*}
|\dot{q}| = \left\{ \begin{array}{ll}|q_{\rm st}| + \displaystyle\frac{q_{\rm st}}{|q_{\rm st}|}q_{\rm in}\epsilon, & {\rm if~}  q_{\rm st} \not = 0, \\
|q_{\rm in}|\epsilon, & {\rm otherwise}.  \end{array}  \right.
\end{equation*}

\subsection{Quaternion, quaternion matrices and quaternion matrix functions}\label{Sect2.2}
Denote by $\mathbb{Q}$ the set of all quaternions. 
For given $u = u_0+ u_1\ii + u_2\jj + u_3\kk\in \mathbb{Q}$, the conjugate of $u$ is $\bar u := u_0- u_1\ii - u_2\jj - u_3\kk$. 
It is easy to verify that $\overline{uv}=\bar{v}\bar u$ for any $u,v\in \mathbb{Q}$. For given $u = u_0 + u_1\ii + u_2\jj + u_3\kk\in \mathbb{Q}$, the norm of $u$ is defined as $|u|:=\sqrt{\bar uu}=\sqrt{u_0^2+u_1^2+u_2^2+u_3^2}$.

Denote by $\mathbb{Q}^{m\times n}$ the collection of all $m\times n$ matrices with quaternion entries. Specifically, denote by $\mathbb{Q}^m$ the collection of all column vectors with $m$ components, i.e., $\mathbb{Q}^m=\mathbb{Q}^{m\times 1}$. We denote the quaternion column vectors by boldfaced lowercase letters (e.g., ${\bf u,v,\ldots}$), and denote the quaternion matrices by capital letters (e.g., $U,V,\ldots$). It is clear that any quaternion matrix $U\in\mathbb{Q}^{m\times n}$ can be expressed as $U=U_0+U_1\ii+U_2\jj+U_3\kk$, where $U_1, U_2, U_2, U_3\in \mathbb{R}^{m\times n}$. For given $U=(u_{ij})\in \mathbb{Q}^{m\times n}$, the transpose of $U$ is denoted as $U^\top=(u_{ji})$, the conjugate of $U$ is denoted as $\bar U = (\bar u_{ij})$, and the conjugate transpose of $U$ is denoted as $U^*=(\bar u_{ji})=\bar U^\top$. A square matrix $U\in \mathbb{Q}^{m\times m}$ is called Hermitian if $U^*=U$. A quaternion Hermitian matrix $Q\in \mathbb{Q}^{m\times m}$ is called positive definite, if ${\bf x}^*Q{\bf x}>0$ for any nonzero ${\bf x}\in \mathbb{Q}^m$. For given $U=(u_{ij}),~V=(v_{ij})\in \mathbb{Q}^{m\times n}$, denote by $\langle U,V\rangle$ the quaternion-valued inner product, i.e., $\langle U,V\rangle=\sum_{i=1}^m\sum_{j=1}^n\bar v_{ij}u_{ij}$, and denote by $\langle U,V\rangle_R:=\frac{1}{2}(\langle U,V\rangle+\langle V,U\rangle)$ the real-valued inner product of $U$ and $V$. 
It is obvious that $\langle U,V\rangle={\rm trace}(V^*U)$ and $\langle U,V\rangle_R=\frac{1}{2}({\rm trace}(V^*U)+{\rm trace}(U^*V))$, where ${\rm trace}(A)=\sum_{i=1}^n a_{ii}$ for any $A=(a_{ij})\in \mathbb{Q}^{n\times n}$. For any $U=(u_{ij})\in \mathbb{Q}^{m\times n}$, the Frobenius norm of $U$ is defined by
$$
\|U\|_F=\sqrt{\langle U,U\rangle}=\sqrt{\sum_{i=1}^m\sum_{j=1}^n|u_{ij}|^2}.
$$
It is easy to verify that
$\|U\|_F=\sqrt{\|U_0\|^2+\|U_1\|^2+\|U_2\|^2+\|U_3\|^2}$,
where $U=U_0+U_1\ii+U_2\jj+U_3\kk\in \mathbb{Q}^{m\times n}$. For given positive definite matrix $Q\in \mathbb{Q}^{m\times m}$, define $\|U\|_Q=\sqrt{\langle QU,U\rangle}=\sqrt{{\rm trace}(U^* QU)}$ for $U\in \mathbb{Q}^{m\times n}$. It is easy to verify that $\|\cdot\|_Q$ is a norm on $\mathbb{Q}^{m\times n}$, i.e., $\|\cdot\|_Q$ satisfies: i) $\|U\|_Q\geq 0$ for any $U\in \mathbb{Q}^{m\times n}$, and $\|U\|_Q=0$ if and only if $U=0$; ii) $\|U\alpha\|_Q=|\alpha|\|U\|_Q$ for any $U\in \mathbb{Q}^{m\times n}$ and $\alpha\in \mathbb{Q}$; and iii) $\|U+V\|_Q\leq \|U\|_Q+\|V\|_Q$ for any $U,V\in \mathbb{Q}^{m\times n}$.

\begin{proposition}\label{QM-Ineq}
 Let $A\in \mathbb{Q}^{m\times p}$ and $B\in \mathbb{Q}^{p\times n}$. If $m\geq p$, then it holds that $\sigma_{\rm min}(A)\|B\|_F\leq \|AB\|_F$, where $\sigma_{\rm min}(A)$ is the smallest singular value of $A$.
 \end{proposition}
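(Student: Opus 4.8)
The plan is to use the quaternion singular value decomposition together with the unitary invariance of the Frobenius norm. Since $m \ge p$, write $A = U\Sigma V^*$ (see \cite{WLZZ18}), where $U \in \mathbb{Q}^{m\times m}$ and $V \in \mathbb{Q}^{p\times p}$ are unitary and $\Sigma \in \mathbb{R}^{m\times p}$ carries on its main diagonal the singular values $\sigma_1 \ge \sigma_2 \ge \cdots \ge \sigma_p = \sigma_{\rm min}(A) \ge 0$ and zeros elsewhere. First I would record that $\|\cdot\|_F$ is invariant under left or right multiplication by a quaternion unitary matrix: for unitary $U$ and conformable $X$, $\|UX\|_F^2 = {\rm trace}(X^*U^*UX) = {\rm trace}(X^*X) = \|X\|_F^2$, and symmetrically on the right. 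Hence $\|AB\|_F = \|U\Sigma V^*B\|_F = \|\Sigma(V^*B)\|_F$ while $\|B\|_F = \|V^*B\|_F$.

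Next, putting $C = V^*B \in \mathbb{Q}^{p\times n}$ with entries $c_{ij}$, the matrix $\Sigma C$ has $(i,j)$-entry $\sigma_i c_{ij}$ for $1 \le i \le p$ — the scalars $\sigma_i$ are real, so they commute past quaternions and the noncommutativity of $\mathbb{Q}$ is harmless here — and it has vanishing rows $p+1,\dots,m$ precisely because $m \ge p$. Therefore $\|AB\|_F^2 = \|\Sigma C\|_F^2 = \sum_{i=1}^p\sum_{j=1}^n \sigma_i^2|c_{ij}|^2 \ge \sigma_p^2 \sum_{i=1}^p\sum_{j=1}^n |c_{ij}|^2 = \sigma_{\rm min}(A)^2\|C\|_F^2 = \sigma_{\rm min}(A)^2\|B\|_F^2$, and taking square roots finishes the proof. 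An equivalent route avoids the SVD: since $A^*A$ is Hermitian positive semidefinite with smallest eigenvalue $\sigma_{\rm min}(A)^2$, one has $\|AB\|_F^2 = {\rm trace}(B^*A^*AB) = \sum_{j}\mathbf{b}_j^*(A^*A)\mathbf{b}_j \ge \sigma_{\rm min}(A)^2\sum_j \|\mathbf{b}_j\|_F^2 = \sigma_{\rm min}(A)^2\|B\|_F^2$, where $\mathbf{b}_1,\dots,\mathbf{b}_n$ are the columns of $B$ and each term is bounded below via the quaternion Rayleigh quotient inequality $\mathbf{x}^*(A^*A)\mathbf{x} \ge \sigma_{\rm min}(A)^2\|\mathbf{x}\|_F^2$.

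I do not anticipate a real obstacle; the subtleties are purely bookkeeping. The hypothesis $m \ge p$ is exactly what guarantees that $A$ has $p$ singular values, i.e.\ that $\Sigma^\top\Sigma = {\rm diag}(\sigma_1^2,\dots,\sigma_p^2)$ is $p\times p$ with all diagonal entries honest squared singular values, so that $\sigma_{\rm min}(A)^2$ is genuinely the least eigenvalue of $A^*A$; when $m < p$ this breaks down and the inequality can fail (take a nonzero $1\times 2$ matrix $A$ annihilating a unit vector $\mathbf{b}\in\mathbb{Q}^2$, so that $\|A\mathbf{b}\|_F = 0 < \sigma_{\rm min}(A)\|\mathbf{b}\|_F$). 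The only quaternion-specific ingredients are the existence of the SVD, the unitary invariance of $\|\cdot\|_F$, and that $|\sigma c| = \sigma|c|$ for real $\sigma \ge 0$ and $c \in \mathbb{Q}$ — all immediate from the definitions recalled in Section \ref{Sect2-Prelim}.
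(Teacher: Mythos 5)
Your proof is correct and takes essentially the same route as the paper: the paper's own proof simply invokes the quaternion SVD from Zhang (1997) and then defers the remaining computation to ``a similar way used in the proof of Corollary 9.6.7'' of Bernstein, which is exactly the unitary-invariance-plus-diagonal-scaling argument you write out in full. Your secondary route via the Rayleigh quotient of $A^*A$ and your counterexample for $m<p$ are correct additions but not part of the paper's argument.
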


 \begin{proof}
 By Theorem 7.2 in \cite{Zh97}, there exist unitary matrices $U\in \mathbb{Q}^{m\times p}$ and $V\in \mathbb{Q}^{p\times p}$, such that $A=U \Sigma V^*$, where $\Sigma={\rm diag}(\sigma_1(A),\sigma_2(A),\ldots,\sigma_{\rm min}(A))$. Consequently, the desired inequality can be proved by a similar way used in the proof of Corollary 9.6.7 in \cite{Be09}.
 \qed\end{proof}


\begin{proposition}\label{PthGLS}
Let $Q\in \mathbb{Q}^{m\times m}$ be positive definite. For any $U,V,W\in \mathbb{Q}^{m\times n}$, it holds that
$$
\|U-V\|_Q^2-\|W-V\|_Q^2-\|U-W\|_Q^2=2\langle U-W,Q(W-V)\rangle_R=2\langle Q(U-W),W-V\rangle_R.
$$
\end{proposition}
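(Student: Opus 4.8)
The plan is to reduce everything to a single expansion of the $Q$-norm after the substitution $a := U-W$ and $b := W-V$, so that $U-V = a+b$, $W-V = b$, $U-W = a$. Under this change of variables the left-hand side becomes $\|a+b\|_Q^2 - \|b\|_Q^2 - \|a\|_Q^2$, while the two right-hand sides become $2\langle a, Qb\rangle_R$ and $2\langle Qa, b\rangle_R$. So it suffices to prove the purely algebraic identity $\|a+b\|_Q^2 - \|a\|_Q^2 - \|b\|_Q^2 = 2\langle Qa, b\rangle_R = 2\langle a, Qb\rangle_R$ for arbitrary $a,b \in \mathbb{Q}^{m\times n}$.

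First I would record the three elementary facts to be used: (i) the quaternion-valued inner product $\langle \cdot,\cdot\rangle$ is additive in each argument, which requires no commutativity; (ii) $\|X\|_Q^2 = \langle QX, X\rangle = {\rm trace}(X^* Q X)$ is real, since $X^* Q X$ is Hermitian (using $Q^* = Q$) and a quaternion Hermitian matrix has real diagonal entries; and (iii) the adjoint identity $\langle Qa, b\rangle = {\rm trace}(b^* Q a) = {\rm trace}\big((Qb)^* a\big) = \langle a, Qb\rangle$ and likewise $\langle Qb, a\rangle = \langle b, Qa\rangle$, both again by $Q^* = Q$. Then, expanding via additivity, $\|a+b\|_Q^2 = \langle Q(a+b), a+b\rangle = \langle Qa,a\rangle + \langle Qa,b\rangle + \langle Qb,a\rangle + \langle Qb,b\rangle = \|a\|_Q^2 + \|b\|_Q^2 + \langle Qa,b\rangle + \langle Qb,a\rangle$, so that $\|a+b\|_Q^2 - \|a\|_Q^2 - \|b\|_Q^2 = \langle Qa,b\rangle + \langle Qb,a\rangle$.

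To finish, I would rewrite the cross term $\langle Qa,b\rangle + \langle Qb,a\rangle$ in the two desired forms using fact (iii): replacing $\langle Qb,a\rangle$ by $\langle b, Qa\rangle$ gives $\langle Qa,b\rangle + \langle b,Qa\rangle = 2\langle Qa,b\rangle_R$ by the definition of the real-valued inner product, and replacing instead $\langle Qa,b\rangle$ by $\langle a,Qb\rangle$ gives $\langle a,Qb\rangle + \langle Qb,a\rangle = 2\langle a,Qb\rangle_R$. Substituting back $a = U-W$ and $b = W-V$ yields the stated identity. There is no genuine obstacle here; the only point needing care is quaternion noncommutativity, which forbids pulling scalars through $\langle\cdot,\cdot\rangle$, but the argument uses only additivity together with the Hermitian/trace identities, all of which are insensitive to noncommutativity. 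It is worth stating explicitly that every term appearing is real-valued, so the equalities between the scalar expression on the left and the real inner products on the right are meaningful exactly as written.
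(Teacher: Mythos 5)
Your proof is correct and follows exactly the route the paper has in mind: the paper's own ``proof'' is a one-line remark that the identity follows as in the real-matrix Pythagoras theorem, and your argument is precisely that expansion, carried out with the right care for the quaternionic setting (additivity of $\langle\cdot,\cdot\rangle$, realness of $\operatorname{trace}(X^*QX)$ via Hermitianness of $Q$, and the adjoint identity $\langle Qa,b\rangle=\langle a,Qb\rangle$ from $Q^*=Q$ without invoking any cyclic trace property). Nothing is missing; you have simply supplied the details the paper omits.
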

\begin{proof}
It can be proved by a similar way used in the proof of Pythagoras theorem of real matrices.
\qed\end{proof}

\begin{proposition}\label{Prop-4}
For any $A=(a_{ij}), B=(b_{ij})\in \mathbb{Q}^{m\times n}$, it holds that $|\langle A,B\rangle_R|\leq \|A\|_F\|B\|_F$.
\end{proposition}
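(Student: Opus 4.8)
The plan is to reduce the claimed inequality to the classical Cauchy--Schwarz inequality over the reals. First I would unpack the real-valued inner product: writing $a_{ij}=a_{ij}^{(0)}+a_{ij}^{(1)}\ii+a_{ij}^{(2)}\jj+a_{ij}^{(3)}\kk$ and similarly for $b_{ij}$, a short computation using $\ii^2=\jj^2=\kk^2=-1$ shows that the real component of $\bar b_{ij}a_{ij}$ equals $\sum_{k=0}^{3}a_{ij}^{(k)}b_{ij}^{(k)}$, the cross terms involving the imaginary units cancelling in the real part. Since $\overline{uv}=\bar v\bar u$ gives $\langle B,A\rangle=\overline{\langle A,B\rangle}$, we get
$$
\langle A,B\rangle_R=\tfrac12\big(\langle A,B\rangle+\overline{\langle A,B\rangle}\big)=\mathrm{Re}\,\langle A,B\rangle=\sum_{i=1}^{m}\sum_{j=1}^{n}\sum_{k=0}^{3}a_{ij}^{(k)}b_{ij}^{(k)},
$$
so $\langle A,B\rangle_R$ is precisely the Euclidean inner product of the real coefficient vectors of $A$ and $B$ in $\mathbb{R}^{4mn}$.

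Next I would recall that $\|A\|_F^2=\sum_{i,j}|a_{ij}|^2=\sum_{i,j}\sum_{k=0}^{3}\big(a_{ij}^{(k)}\big)^2$ is the squared Euclidean norm of that same coefficient vector, and likewise for $B$. Applying the classical Cauchy--Schwarz inequality in $\mathbb{R}^{4mn}$ then yields $|\langle A,B\rangle_R|\le\|A\|_F\|B\|_F$ immediately. Alternatively, staying closer to the quaternionic formalism, one can argue directly that
$$
|\langle A,B\rangle_R|=\big|\mathrm{Re}\,\langle A,B\rangle\big|\le\big|\langle A,B\rangle\big|\le\sum_{i,j}\big|\bar b_{ij}a_{ij}\big|=\sum_{i,j}|a_{ij}|\,|b_{ij}|,
$$
using the bound of the real part of a quaternion by its norm, the triangle inequality, and $|\bar b_{ij}a_{ij}|=|b_{ij}|\,|a_{ij}|$; a final application of the scalar Cauchy--Schwarz inequality to the nonnegative reals $|a_{ij}|$ and $|b_{ij}|$ closes the argument.

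I do not anticipate a genuine obstacle here: the result is essentially Cauchy--Schwarz in disguise. The only point deserving care is the noncommutativity of quaternion multiplication, which must be handled when verifying $\langle B,A\rangle=\overline{\langle A,B\rangle}$, and hence that $\langle A,B\rangle_R$ is a real number so that taking its absolute value is meaningful; the remaining computations are routine.
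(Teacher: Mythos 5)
Your proposal is correct and follows essentially the same route as the paper: both identify $\langle A,B\rangle_R$ with the Euclidean inner product of the real coefficient data (the paper via the block decomposition $A=A_0+A_1\ii+A_2\jj+A_3\kk$ and the identity $\langle A,B\rangle_R=\sum_{l=0}^3\langle A_l,B_l\rangle$, you entry-wise in $\mathbb{R}^{4mn}$) and then invoke the classical Cauchy--Schwarz inequality. Your second, purely quaternionic argument via $|\mathrm{Re}\,q|\le|q|$ and the multiplicativity of the quaternion norm is also valid, but it is offered only as an aside and is not the paper's route.
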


\begin{proof}
Write $A=A_0+A_1\ii+A_2\jj+A_3\kk$ and $B=B_0+B_1\ii+B_2\jj+B_3\kk$, where $A_l,B_l\in \mathbb{R}^{m\times n}$ for $l=0,1,2,3$. It is obvious that $a_{ij}=(a_{ij})_0+(a_{ij})_1\ii+(a_{ij})_2\jj+(a_{ij})_3\kk$ and $b_{ij}=(b_{ij})_0+(b_{ij})_1\ii+(b_{ij})_2\jj+(b_{ij})_3\kk$ for any $i=1,2,\ldots,m$ and $j=1,2,\ldots,n$, where $(a_{ij})_l$ and $(b_{ij})_l$ are the $(i,j)$-th elements in $A_l$ and $B_l$, respectively, for $l=0,1,,2,3$. By the definition of quaternion matrices inner product, we have $\langle A,B\rangle+\langle B,A\rangle=\sum_{i=1}^m\sum_{j=1}^n(\bar b_{ij}a_{ij}+\bar a_{ij}b_{ij})$, which implies, together with Theorem 3 in \cite{QLY21}, that $\langle A,B\rangle_R=\frac{1}{2}\langle A,B\rangle+\langle B,A\rangle=\sum_{l=0}^3\langle A_l,B_l\rangle$. Consequently, we have
$$
|\langle A,B\rangle_R|\leq\displaystyle \sum_{l=0}^3\left|\langle A_l,B_l\rangle\right|\leq\displaystyle \sum_{l=0}^3\|A_l\|_F\|B_l\|_F\leq\displaystyle \sqrt{\sum_{l=0}^3\|A_l\|^2_F}\sqrt{\sum_{l=0}^3\|B_l\|^2_F}=\|A\|_F\|B\|_F,
$$
where the second and third inequality come Cauchy-Schwarz inequality for real matrices, and the last equality is due to the definition of Frobenius norm of quaternion matrices. We complete the proof.
\qed\end{proof}

\begin{theorem} \cite{Zh97} Any quaternion matrix $A\in \mathbb{Q}^{m\times n}$ has the following  QSVD form
$$
A=U\left[\begin{array}{cc}
\Sigma_r&O\\
O&O
\end{array}\right]V^*,
$$
 where $U\in\mathbb{Q}^{m\times m}$ and $V\in\mathbb{Q}^{n\times n}$  are unitary, and $\Sigma_r={\rm diag}(\sigma_1(A),\sigma_2(A),\ldots,\sigma_r(A))$ is a real  positive $r\times r$ diagonal matrix, with $\sigma_1(A)\geq \sigma_2(A)\geq\ldots\geq\sigma_r(A)>0$ as the nonzero singular values of $A$.
\end{theorem}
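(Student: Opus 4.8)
The plan is to reduce the existence of the QSVD to the spectral theorem for quaternion Hermitian matrices applied to $A^*A$. First I would form $A^*A\in\mathbb{Q}^{n\times n}$, which is Hermitian and positive semidefinite since ${\bf x}^*(A^*A){\bf x}=\|A{\bf x}\|_F^2\ge 0$ for every ${\bf x}\in\mathbb{Q}^n$. Invoking the spectral decomposition of Hermitian quaternion matrices (all right eigenvalues are real and there is a unitary eigenbasis; see \cite{Zh97}), I obtain a unitary $V\in\mathbb{Q}^{n\times n}$ with $V^*(A^*A)V={\rm diag}(\lambda_1,\ldots,\lambda_n)$, where $\lambda_1\ge\cdots\ge\lambda_r>0=\lambda_{r+1}=\cdots=\lambda_n$. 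Setting $\sigma_i(A)=\sqrt{\lambda_i}$, $\Sigma_r={\rm diag}(\sigma_1(A),\ldots,\sigma_r(A))$, and splitting $V=[\,V_1\ V_2\,]$ with $V_1\in\mathbb{Q}^{n\times r}$, this says $A^*AV_1=V_1\Sigma_r^2$ and $A^*AV_2=O$ (here right multiplication by the real diagonal matrix $\Sigma_r^2$ is just column-wise scaling by the $\sigma_i(A)^2$, which commute with quaternion scalars).

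Next I would build the left factor. Since $\|AV_2\|_F^2={\rm trace}(V_2^*A^*AV_2)={\rm trace}(V_2^*O)=0$, it follows that $AV_2=O$. Define $U_1:=AV_1\Sigma_r^{-1}\in\mathbb{Q}^{m\times r}$, which is legitimate because each $\sigma_i(A)>0$. Using $V_1^*V_1=I_r$ (a column block of the unitary $V$) one checks $U_1^*U_1=\Sigma_r^{-1}V_1^*A^*AV_1\Sigma_r^{-1}=\Sigma_r^{-1}\Sigma_r^2\Sigma_r^{-1}=I_r$, so the columns of $U_1$ are orthonormal in the quaternion inner product. Applying quaternion Gram--Schmidt (which works because $\langle\cdot,\cdot\rangle$ induces a positive definite Hermitian form on $\mathbb{Q}^m$) I extend them to an orthonormal basis of $\mathbb{Q}^m$, obtaining a unitary $U=[\,U_1\ U_2\,]\in\mathbb{Q}^{m\times m}$ with $U_2^*U_1=O$.

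Finally I would verify the block structure of $U^*AV$. The $(1,1)$ block is $U_1^*AV_1=\Sigma_r^{-1}V_1^*A^*AV_1=\Sigma_r^{-1}\Sigma_r^2=\Sigma_r$; the $(1,2)$ and $(2,2)$ blocks are $U_1^*AV_2=O$ and $U_2^*AV_2=O$ because $AV_2=O$; and the $(2,1)$ block is $U_2^*AV_1=U_2^*(AV_1)=(U_2^*U_1)\Sigma_r=O$. Hence
$$
U^*AV=\left[\begin{array}{cc}\Sigma_r&O\\ O&O\end{array}\right],
$$
and multiplying on the left by $U$ and on the right by $V^*$ (using unitarity) yields $A=U\left[\begin{smallmatrix}\Sigma_r&O\\ O&O\end{smallmatrix}\right]V^*$. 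The diagonal entries of $\Sigma_r$ are, by construction, the positive square roots of the eigenvalues of $A^*A$ listed in nonincreasing order, i.e. the nonzero singular values of $A$.

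I expect the main obstacle to be justifying the spectral theorem for quaternion Hermitian matrices over the noncommutative ring $\mathbb{Q}$ --- in particular that the right eigenvalues of $A^*A$ can be taken real and that an orthonormal eigenbasis exists with the correct multiplicities. The standard route is through the complex adjoint map $\phi:\mathbb{Q}^{n\times n}\to\mathbb{C}^{2n\times 2n}$: one shows $\phi$ sends Hermitian matrices to Hermitian matrices and unitary matrices to unitary matrices, diagonalizes $\phi(A^*A)$ over $\mathbb{C}$, and then checks that its real eigenvalues occur in pairs so that they descend to a genuine quaternion spectral decomposition. Once that ingredient is in place, the remainder is bookkeeping with the quaternion inner product, the identity $\|U\|_F^2={\rm trace}(U^*U)$, and the fact that real diagonal matrices commute with quaternion scalars.
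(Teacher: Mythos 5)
The paper does not prove this statement at all: it is quoted verbatim from Zhang's 1997 survey (Theorem 7.2 in \cite{Zh97}) and used as a black box, so there is no in-paper argument to compare against. Your derivation is the standard and correct reduction of the QSVD to the spectral theorem for quaternion Hermitian matrices: $A^*A$ is Hermitian positive semidefinite, a unitary $V$ real-diagonalizes it, $U_1:=AV_1\Sigma_r^{-1}$ has orthonormal columns, $AV_2=O$ follows from ${\rm trace}(V_2^*A^*AV_2)=0$, and the block computation $U^*AV=\left[\begin{smallmatrix}\Sigma_r&O\\ O&O\end{smallmatrix}\right]$ goes through because $\Sigma_r$ is real diagonal and hence commutes with quaternion scalars (a point you rightly flag, since right-multiplication by a general quaternion diagonal matrix would not behave this way). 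You also correctly identify the one genuinely nontrivial ingredient — that a quaternion Hermitian matrix has real right eigenvalues and a unitary eigenbasis — and your sketch via the complex adjoint $\phi:\mathbb{Q}^{n\times n}\to\mathbb{C}^{2n\times 2n}$ (eigenvalues of $\phi(H)$ occur with even multiplicity and the eigenspaces are invariant under the symplectic involution, so the decomposition descends) is exactly how this is established in the literature. In short, your proof is sound and self-contained modulo that spectral theorem, whereas the paper simply cites the result; if anything, your route via $A^*A$ is the cleaner one compared with proofs that run the entire SVD through the complex adjoint of $A$ itself.
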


The rank of a quaternion matrix can be defined as the number of nonzero singular
 values, and the nuclear norm of a quaternion matrix $A$, denoted by $\|A\|_\circ$, is defined as the sum of all nonzero singular values, i.e., $\|A\|_\circ=\sum_{i=1}^r\sigma_i(A)$.

Let $f:\mathbb{Q}^{m\times n}\rightarrow \mathbb{R}$. We say that $f$ is a convex function if for any $X,X^\prime\in \mathbb{Q}^{m\times n}$ and any $t\in [0,1]$, we have $f(tX+(1-t)X^\prime)\leq tf(X)+(1-t)f(X^\prime)$.  Suppose that $f$ is differentiable at $X$ with respect to $X_i$ for $i=0,1,2,3$. We define the gradient of $f$ at $X$ as $\nabla f(X)=\frac{\partial f(X)}{\partial X_0}+\frac{\partial f(X)}{\partial X_1}\ii+\frac{\partial f(X)}{\partial X_2}\jj+\frac{\partial f(X)}{\partial X_3}\kk$. It is clear that $\nabla f(X)\in \mathbb{Q}^{m\times n}$. 
\begin{proposition}\label{Prop-AB3}\cite{CQZX20}
 Suppose that $f : \mathbb{Q}^{m\times n}\rightarrow \mathbb{R}$ is defined by $f(X)=\frac{1}{2}\|AX+B\|_F^2$, where $A\in \mathbb{Q}^{s\times m}$ and $B\in \mathbb{Q}^{s\times n}$. Then it holds that $\nabla f(X)=A^*(AX+B)$.
\end{proposition}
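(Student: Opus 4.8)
The plan is to avoid a direct componentwise differentiation of the quadratic $\|AX+B\|_F^2$ in the sixteen real coordinate blocks, and instead compute a one-parameter directional derivative and then convert it into the claimed gradient via the real inner product $\langle\cdot,\cdot\rangle_R$. Fix $X,H\in\mathbb{Q}^{m\times n}$ and set $\varphi(t)=f(X+tH)$ for $t\in\mathbb{R}$. Since $A(X+tH)+B=(AX+B)+t\,AH$ and $\|\cdot\|_F^2=\langle\cdot,\cdot\rangle$ is real-valued, expanding the inner product and using $\langle U,V\rangle+\langle V,U\rangle=2\langle U,V\rangle_R$ gives
\[
\varphi(t)=\frac{1}{2}\|AX+B\|_F^2+t\,\langle AX+B,\,AH\rangle_R+\frac{t^2}{2}\|AH\|_F^2,
\]
so that $\varphi'(0)=\langle AX+B,\,AH\rangle_R$.

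Next I would rewrite $\varphi'(0)$ so that $H$ appears alone. Using $\langle U,V\rangle={\rm trace}(V^*U)$ together with $(AH)^*=H^*A^*$ gives $\langle AX+B,AH\rangle={\rm trace}\big(H^*A^*(AX+B)\big)=\langle A^*(AX+B),H\rangle$; taking real parts yields $\varphi'(0)=\langle A^*(AX+B),\,H\rangle_R$. The only care needed here is to keep the factors in the correct order, since quaternion multiplication is noncommutative.

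Finally I would identify $\varphi'(0)$ with $\langle\nabla f(X),H\rangle_R$. Writing $H=H_0+H_1\ii+H_2\jj+H_3\kk$ with real $H_l$, the real component matrices of $X+tH$ are $X_l+tH_l$, so the chain rule gives $\varphi'(0)=\sum_{l=0}^3\langle \partial f(X)/\partial X_l,\,H_l\rangle$, a sum of ordinary real Frobenius inner products. On the other hand, by the identity $\langle P,Q\rangle_R=\sum_{l=0}^3\langle P_l,Q_l\rangle$ established in the proof of Proposition \ref{Prop-4} (applied with $P=\nabla f(X)$, whose components are by definition $\partial f(X)/\partial X_l$, and $Q=H$), we get $\langle\nabla f(X),H\rangle_R=\sum_{l=0}^3\langle \partial f(X)/\partial X_l,\,H_l\rangle$. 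Hence $\langle\nabla f(X),H\rangle_R=\langle A^*(AX+B),H\rangle_R$ for every $H\in\mathbb{Q}^{m\times n}$, and since $\langle\cdot,\cdot\rangle_R$ is nondegenerate on $\mathbb{Q}^{m\times n}$ (it is just the Euclidean inner product on the $4mn$ real coordinates), we conclude $\nabla f(X)=A^*(AX+B)$.

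The argument is essentially routine; the only steps that require attention are the bookkeeping forced by noncommutativity in the adjoint identity $\langle AX+B,AH\rangle=\langle A^*(AX+B),H\rangle$, and the verification that the component-splitting identity from Proposition \ref{Prop-4} legitimately lets one pass from equality of all real directional derivatives to equality of the quaternion matrices themselves. A more pedestrian alternative — expanding $AX+B$ into its four real blocks via the quaternion multiplication table, differentiating the resulting real quadratic in $X_0,\dots,X_3$, and reassembling — also works, but it forces one to track sixteen terms and match them against the real blocks of $A^*(AX+B)$, which is why I prefer the inner-product route above.
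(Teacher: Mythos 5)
Your argument is correct. Note first that the paper itself offers no proof of this proposition: it is stated as a citation to \cite{CQZX20}, and the only related derivation in the source (a draft proposition computing $\nabla\bigl(\tfrac12(\langle AX,B\rangle+\langle B,AX\rangle)\bigr)=A^*B$) proceeds by exactly the ``pedestrian alternative'' you describe at the end --- expanding $AX$ into its four real blocks via the multiplication table, differentiating each of the sixteen real bilinear terms, and reassembling the four partials into a quaternion matrix. Your route is genuinely different and cleaner: the directional-derivative expansion $\varphi'(0)=\langle AX+B,AH\rangle_R$, the adjoint identity $\langle AX+B,AH\rangle=\operatorname{trace}\bigl(H^*A^*(AX+B)\bigr)=\langle A^*(AX+B),H\rangle$ (which only needs $(AH)^*=H^*A^*$ and associativity, not cyclicity of the quaternion trace), and the component-splitting identity $\langle P,Q\rangle_R=\sum_{l=0}^3\langle P_l,Q_l\rangle$ from the proof of Proposition~\ref{Prop-4} to match $\varphi'(0)$ with $\langle\nabla f(X),H\rangle_R$ are all sound, and the final nondegeneracy step is legitimate since $\langle\cdot,\cdot\rangle_R$ is the Euclidean inner product on the $4mn$ real coordinates. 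What your approach buys is that it scales painlessly to any composition of the squared Frobenius norm with an affine map and makes the noncommutativity bookkeeping essentially automatic, whereas the componentwise computation is elementary but error-prone; the one thing the blockwise route makes slightly more explicit is the differentiability of $f$ in $(X_0,\dots,X_3)$, which in your version is absorbed into the observation that $\varphi$ is a real quadratic polynomial in $t$ for every direction $H$.
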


For given proper convex function $f:\mathbb{Q}^{m\times n}\rightarrow \mathbb{R}$ and $X\in \mathbb{Q}^{m\times n}$, the subdifferential of $f$ at $X$, denoted by $\partial f(X)$, is defined by
$$
\partial f(X)=\{Z\in \mathbb{Q}^{m\times n}:f(X^\prime)-f(X)-\langle Z,X^\prime- X\rangle_R\geq 0,~~\forall ~X^\prime\in \mathbb{Q}^{m\times n}\}.
$$
From Proposition 5.1 in \cite{QLWZ22}, the definition of $\langle \cdot,\cdot\rangle_R$ and the knowledge of convex  functions of real variables, we know that $\partial f(X)$ is a nonempty, convex and compact set in $\mathbb{Q}^{m\times n}$. The subdifferential $\partial f(X)$ is a singleton if and only if $f$ is differentiable at $X$. In this case, $\partial f( X)=\{\nabla f(X)\}$. Moreover, from the definition of $\partial f(\cdot)$, we know that, if $X^\diamond={\arg \min}_X f(X)$, then $0\in \partial f(X^\diamond)$.

 We present the following proposition, which shows that a subdifferential map of a convex quaternion matrix function is also monotone under $\langle\cdot,\cdot\rangle_R$, and will be used in later section.
\begin{proposition}\label{Monoto-Prop}
Let $f:\mathbb{Q}^{m\times n}\rightarrow \mathbb{R}$ be a proper convex function. For any $X,X^\prime\in {\rm dom} f$, it holds that
$$
\langle X-X^\prime, S-T\rangle_R\geq 0,~~~~\forall~S\in \partial f(X)~~{\rm and}~~T\in \partial f(X^\prime).
$$
\end{proposition}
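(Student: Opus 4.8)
The plan is to obtain the inequality directly from the two defining subgradient inequalities and add them. First I would write down, using $S\in\partial f(X)$, the inequality $f(X')-f(X)-\langle S,X'-X\rangle_R\ge 0$, and, using $T\in\partial f(X')$, the inequality $f(X)-f(X')-\langle T,X-X'\rangle_R\ge 0$. Adding these two inequalities makes the function values cancel and leaves $-\langle S,X'-X\rangle_R-\langle T,X-X'\rangle_R\ge 0$.

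Next I would invoke the elementary algebraic properties of the real-valued inner product $\langle\cdot,\cdot\rangle_R$ on $\mathbb{Q}^{m\times n}$: it is symmetric, i.e. $\langle U,V\rangle_R=\langle V,U\rangle_R$, and it is additive in each argument, both of which follow at once from the definition $\langle U,V\rangle_R=\tfrac12(\langle U,V\rangle+\langle V,U\rangle)$ together with additivity of the quaternion-valued inner product $\langle\cdot,\cdot\rangle$; equivalently, one may use the identity $\langle A,B\rangle_R=\sum_{l=0}^3\langle A_l,B_l\rangle$ recorded in the proof of Proposition \ref{Prop-4}, which exhibits $\langle\cdot,\cdot\rangle_R$ as the ordinary real inner product of the real component matrices. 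In particular $\langle S,X'-X\rangle_R=-\langle S,X-X'\rangle_R$ and similarly for $T$, so the inequality above rewrites as $\langle S,X-X'\rangle_R-\langle T,X-X'\rangle_R\ge 0$, that is, $\langle S-T,X-X'\rangle_R\ge 0$; applying symmetry once more yields $\langle X-X',S-T\rangle_R\ge 0$, which is the claim.

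I do not anticipate any genuine obstacle: this is the quaternionic analogue of the classical monotonicity of the subdifferential, and the only point to verify is that $\langle\cdot,\cdot\rangle_R$ behaves like a bona fide real inner product under the operations used, which is transparent from its definition. The one thing worth stating carefully in the write-up is that $\mathrm{dom}\,f=\mathbb{Q}^{m\times n}$ here (since $f$ is real-valued and proper on $\mathbb{Q}^{m\times n}$) and that $\partial f(X),\partial f(X')$ are nonempty by the discussion preceding the proposition, so that the quantifiers in the statement are not vacuous.
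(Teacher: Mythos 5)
Your proof is correct and is exactly the standard monotonicity argument that the paper itself invokes: the paper's proof is a one-line appeal to Proposition 5.1 of \cite{QLWZ22} and ``the knowledge of convex functions of real variables,'' whereas you simply write that argument out from the paper's own definition of $\partial f$ (add the two subgradient inequalities, then use symmetry and additivity of $\langle\cdot,\cdot\rangle_R$). Your version is, if anything, more self-contained than the paper's, and the supporting observations about $\langle\cdot,\cdot\rangle_R$ being a genuine real inner product and about the nonemptiness of the subdifferentials are accurate.
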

\begin{proof}
It follows  immediately from Proposition 5.1 in \cite{QLWZ22}, the definition of $\langle \cdot,\cdot\rangle_R$ and the knowledge of convex  functions of real variables.
\qed\end{proof}


\subsection{Dual quaternion and dual quaternion matrices}\label{Sect2.3}
Denote by $\dot{\mathbb{Q}}$ the set of all dual quaternions. A dual quaternion $\dot{q}\in \dot{\mathbb{Q}}$ has the form $\dot{q} = q_{\rm st} + q_{\rm in}\epsilon$,
where $q_{\rm st}, q_{\rm in} \in \mathbb {Q}$ are the standard part and the infinitesimal part of $\dot{q}$, respectively. Similar to dual numbers, if $q_{\rm st} \not = 0$, then we say that $\dot{q}$ is appreciable; otherwise, we say that $\dot{q}$ is infinitesimal. 
For any $\dot{p}=p_{\rm st} + p_{\rm in}\epsilon, \dot{q}=q_{\rm st} + q_{\rm in}\epsilon\in \dot{\mathbb{Q}}$, denote $\dot{p}+\dot{q}=(p_{\rm st}+q_{\rm st})+(p_{\rm in}+q_{\rm in})\epsilon$, and the multiplication of $\dot{p}$ and $\dot{q}$ is defined as $\dot{p}\dot{q}=p_{\rm st}q_{\rm st} + (p_{\rm in}q_{\rm st}+p_{\rm st}q_{\rm in})\epsilon$. 
The conjugate of $\dot{q}$ is $\dot{q}^* = \bar q_{\rm st} + \bar q_{\rm in}\epsilon$, (see \cite{BK20,Ke12}).  
The magnitude of $\dot{q}=q_{\rm st}+q_{\rm in}\epsilon\in \dot{\mathbb{Q}}$ is defined as
\begin{equation} \label{e7}
\displaystyle|\dot{q}| := \left\{ \begin{array}{ll} |q_{\rm st}| +\displaystyle {(q_{\rm st}\bar q_{\rm in}+q_{\rm in} \bar q_{\rm st}) \over 2|q_{\rm st}|}\epsilon, & \ {\rm if}\  q_{\rm st} \not = 0, \\
|q_{\rm in}|\epsilon, &  \ {\rm otherwise},
\end{array} \right.
\end{equation}
which is a dual number, since $q_{\rm st}\bar q_{\rm in}+q_{\rm in} \bar q_{\rm st}\in \mathbb{R}$. 

Denote by $\dot{\mathbb{Q}}^{m\times n}$ the set of all $m\times n$ matrices with dual quaternion entries.
For given $U = U_{\rm st} + U_{\rm in} \epsilon = (u_{ij}), V=V_{\rm st} + V_{\rm in} \epsilon = (v_{ij}) \in {\dot{\mathbb Q}}^{m \times n}$, denote by $\langle U,V\rangle$ the dual quaternion-valued inner product, i.e., $\langle U,V\rangle=\sum_{i=1}^m\sum_{j=1}^nv_{ij}^*u_{ij}$, and the Frobenius norm of $U$, which is a dual number, is defined by
\begin{equation}\label{FNorm-DQM}
\|U \|_F = \left\{\begin{array}{ll}\displaystyle\sqrt{\sum_{i=1}^m \sum_{j=1}^n |u_{ij}|^2}, & \quad\ {\rm if}\ U_{\rm st} \not = O, \\
\|U_{\rm in}\|_F\epsilon,\quad & \quad \ {\rm otherwise.} \end{array}\right.
\end{equation}

\begin{proposition}\label{Prop-2}
Let $\dot A = A_{\rm st} + A_{\rm in} \epsilon \in {\dot{\mathbb Q}}^{m \times n}$ with $A_{\rm st}\neq O$. we have
$$
\|\dot A\|_F=\|A_{\rm st}\|_F+\frac{\langle A_{\rm st},A_{\rm in}\rangle_R}{\|A_{\rm st}\|_F}\epsilon.
$$
\end{proposition}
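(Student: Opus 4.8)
The plan is to evaluate $\|\dot A\|_F^2$ entrywise as a dual number, to note that its standard part is strictly positive because $A_{\rm st}\neq O$, and then to take the dual-number square root. First I would fix indices $(i,j)$ and write $a=(A_{\rm st})_{ij}$ and $b=(A_{\rm in})_{ij}$, so $u_{ij}=a+b\epsilon$. From the magnitude formula \eqref{e7}, if $a\neq 0$ then $|u_{ij}|=|a|+\frac{a\bar b+b\bar a}{2|a|}\epsilon$ and hence $|u_{ij}|^2=|a|^2+(a\bar b+b\bar a)\epsilon$, while if $a=0$ then $|u_{ij}|=|b|\epsilon$ and hence $|u_{ij}|^2=0$. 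The key simplification is that $a\bar b+b\bar a=a\bar b+\overline{a\bar b}$ is twice the scalar part of $a\bar b$, which equals twice the scalar part of $\bar b a$ by the cyclic property of the scalar part of a quaternion product; since $\langle a,b\rangle=\bar b a$ by the definition of the inner product, this gives $a\bar b+b\bar a=\bar b a+\bar a b=2\langle a,b\rangle_R$. Because the last expression also vanishes when $a=0$, we obtain the single identity
$$
|u_{ij}|^2=\big|(A_{\rm st})_{ij}\big|^2+2\big\langle (A_{\rm st})_{ij},(A_{\rm in})_{ij}\big\rangle_R\,\epsilon ,
$$
valid for every $(i,j)$ irrespective of whether the individual standard part is zero.

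Next I would sum over $i$ and $j$. Since $A_{\rm st}\neq O$, the definition \eqref{FNorm-DQM} gives $\|\dot A\|_F=\sqrt{\sum_{i,j}|u_{ij}|^2}$, and combining the entrywise identity with $\langle A_{\rm st},A_{\rm in}\rangle_R=\sum_{i,j}\langle (A_{\rm st})_{ij},(A_{\rm in})_{ij}\rangle_R$ (immediate from the definition of the real-valued inner product and additivity of the scalar part) yields
$$
\sum_{i=1}^m\sum_{j=1}^n|u_{ij}|^2=\|A_{\rm st}\|_F^2+2\,\langle A_{\rm st},A_{\rm in}\rangle_R\,\epsilon ,
$$
a dual number whose standard part $\|A_{\rm st}\|_F^2$ is strictly positive. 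Finally, for any dual number $\dot d=d_{\rm st}+d_{\rm in}\epsilon$ with $d_{\rm st}>0$ one checks directly that $\left(\sqrt{d_{\rm st}}+\frac{d_{\rm in}}{2\sqrt{d_{\rm st}}}\epsilon\right)^2=d_{\rm st}+d_{\rm in}\epsilon$, so its nonnegative square root is $\sqrt{d_{\rm st}}+\frac{d_{\rm in}}{2\sqrt{d_{\rm st}}}\epsilon$; taking $d_{\rm st}=\|A_{\rm st}\|_F^2$ and $d_{\rm in}=2\langle A_{\rm st},A_{\rm in}\rangle_R$ produces exactly the claimed formula.

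The computation is short, and the only point I would handle carefully is the uniform validity of the entrywise formula for $|u_{ij}|^2$ across the two cases of \eqref{e7}: a priori the branch $q_{\rm st}=0$ looks different, but after squaring the rule $\epsilon^2=0$ collapses both branches to the same expression, which is what makes the subsequent summation and the square-root extraction completely routine.
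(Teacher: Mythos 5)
Your computation is correct and is essentially the argument the paper intends: the paper's own proof is a one-line citation to the definition of the dual-quaternion Frobenius norm and Theorem 3 of \cite{QLY21} (the identity equating $a\bar b+b\bar a$ with $2\langle a,b\rangle_R$), which is exactly the entrywise step you carry out explicitly before summing and extracting the dual-number square root. Your careful note that the $q_{\rm st}=0$ branch of \eqref{e7} collapses to the same squared formula after $\epsilon^2=0$ is a worthwhile detail that the paper leaves implicit.
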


\begin{proof}
It follows from (\ref{FNorm-DQM}) and Theorem 3 in \cite{QLY21}.
\qed\end{proof}

\section{A dual-metric function on $\dot{\mathbb{Q}}^{m\times n}$ }\label{Metric Fun}
A function ${\bf\rho}: \dot{\mathbb{Q}}^{m\times n}\times \dot{\mathbb{Q}}^{m\times n}\rightarrow \dot{\mathbb{R}}$ is called a dual-metric on $\dot{\mathbb{Q}}^{m\times n}$, if it satisfies the following two conditions: i) ${\bf\rho}(\dot X,\dot Y)\geq 0$ for any $\dot X,\dot Y\in \dot{\mathbb{Q}}^{m\times n}$, and ${\bf\rho}(\dot X,\dot Y)=0$ if and only if $\dot X=\dot Y$; ii) ${\bf\rho}(\dot{X},\dot Y)\leq {\bf\rho}(\dot{X},\dot Z)+{\bf\rho}(\dot Z,\dot Y)$ for any $\dot{X},\dot Y,\dot Z\in \dot{\mathbb{Q}}^{m\times n}$. It is easy to verify that a metric function ${\bf\rho}$ defined on $\dot{\mathbb{Q}}^{m\times n}$  must satisfy the symmetry, i.e., ${\bf\rho}(\dot{X},\dot Y)={\bf\rho}(\dot Y,\dot{X})$ for any $\dot{X},\dot Y\in \dot{\mathbb{Q}}^{m\times n}$.

With the help of Frobenius norm $\|\cdot\|_F$ defined on $\mathbb{Q}^{m\times n}$, we present a dual metric function defined on $\dot{\mathbb{Q}}^{m\times n}$ as follows.

\begin{definition}\label{Def-Qnorm}
For given $\dot X=X_{\rm st}+X_{\rm in}\epsilon, \dot Y=Y_{\rm st}+Y_{\rm in}\epsilon\in \dot{\mathbb{Q}}^{m\times n}$, the dual metric of $\dot X$ and $\dot Y$, which is a dual number, is defined by
\begin{equation}\label{metric-f}
{\bf\rho}(\dot X,\dot Y)=\|X_{\rm st}-Y_{\rm st}\|_F+\|X_{\rm in}-Y_{\rm in}\|_F\epsilon.
\end{equation}
\end{definition}
Notice that, ${\bf\rho}(\cdot,\cdot)$ in \eqref{metric-f} is closely related to the function $v(\dot X):=\|X_{\rm st}\|_F+\|X_{\rm in}\|_F\epsilon$ where $\dot X=X_{\rm st}+X_{\rm in}\epsilon\in \dot{\mathbb{Q}}^{m\times n}$, but $v(\cdot)$ is not a norm function defined on $\dot{\mathbb{Q}}^{m\times n}$. In fact, it is easy to obtain the positivity and triangle inequality that the norm function must satisfy hold, due to the $\|X_{\rm st}\|_F$ and $\|X_{\rm in}\|_F$ are Frobenius norm of quaternion matrices $X_{\rm st}$ and $X_{\rm in}$, respectively. However, the homogeneity, i.e., $v(\dot q\dot X)= |\dot q|v(\dot X)$, does not hold, where $\dot q\in \dot{\mathbb{Q}}$, as shown in the following example.

\begin{example}
Let $\dot X=\left[\begin{array}{cc}
1&0\\
0&1
\end{array}\right]+\left[\begin{array}{cc}
2&0\\
0&3
\end{array}\right]\epsilon\in \widehat{\mathbb{R}}^{2\times 2}$ and $\dot q=2+3\epsilon\in\widehat{\mathbb{R}}$. It is easy to see that $v(\dot X)=\sqrt{2}+\sqrt{13}\epsilon$ and $|\dot q|=\dot q$, which implies $|\dot q|v(\dot X)=2\sqrt{2}+(2\sqrt{13}+3\sqrt{2})\epsilon$. Moreover, it is easy to see that $
\dot q\dot X=\left[\begin{array}{cc}
2&0\\
0&2
\end{array}\right]+\left[\begin{array}{cc}
7&0\\
0&9
\end{array}\right]\epsilon
$, which implies $v(\dot q\dot X)=2\sqrt{2}+\sqrt{130}\epsilon<|\dot q|v(\dot X)$.
\end{example}

\begin{proposition}
For given $\dot X=X_{\rm st}+X_{\rm in}\epsilon\in \dot{\mathbb{Q}}^{m\times n}$ and $\dot q=q_{\rm st}+q_{\rm in}\epsilon\in\dot{\mathbb{Q}}$, it holds that
$$
v(\dot q\dot X)\leq |\dot q|_Dv(\dot X),
$$
where $|\dot q|_D=|q_{\rm st}|+|q_{\rm in}|\epsilon\in \widehat{\mathbb{R}}_+$, which is a metric function defined on $\dot{\mathbb{Q}}$.
\end{proposition}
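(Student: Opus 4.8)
The plan is to turn the claimed inequality between dual numbers into two scalar inequalities between standard and infinitesimal parts, and then appeal to the total order on $\dot{\mathbb R}$. First I would expand both sides explicitly. By the multiplication rule for dual quaternions, $\dot q\dot X=q_{\rm st}X_{\rm st}+(q_{\rm in}X_{\rm st}+q_{\rm st}X_{\rm in})\epsilon$, so the definition of $v$ gives
$$v(\dot q\dot X)=\|q_{\rm st}X_{\rm st}\|_F+\|q_{\rm in}X_{\rm st}+q_{\rm st}X_{\rm in}\|_F\,\epsilon .$$
Multiplying the dual numbers $|\dot q|_D=|q_{\rm st}|+|q_{\rm in}|\epsilon$ and $v(\dot X)=\|X_{\rm st}\|_F+\|X_{\rm in}\|_F\epsilon$ gives
$$|\dot q|_D\,v(\dot X)=|q_{\rm st}|\,\|X_{\rm st}\|_F+\bigl(|q_{\rm st}|\,\|X_{\rm in}\|_F+|q_{\rm in}|\,\|X_{\rm st}\|_F\bigr)\epsilon .$$

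Next I would compare the standard parts. Since $q_{\rm st}$ is a single quaternion and $|q_{\rm st}u|=|q_{\rm st}|\,|u|$ for every $u\in\mathbb Q$, one has $\|q_{\rm st}X_{\rm st}\|_F^2=\sum_{i,j}|q_{\rm st}|^2|(X_{\rm st})_{ij}|^2=|q_{\rm st}|^2\|X_{\rm st}\|_F^2$; hence the two standard parts are in fact equal, not merely comparable. By the definition of the total order on $\dot{\mathbb R}$, the inequality $v(\dot q\dot X)\le |\dot q|_D\,v(\dot X)$ therefore reduces to the single inequality between infinitesimal parts
$$\|q_{\rm in}X_{\rm st}+q_{\rm st}X_{\rm in}\|_F\le |q_{\rm in}|\,\|X_{\rm st}\|_F+|q_{\rm st}|\,\|X_{\rm in}\|_F ,$$
which follows immediately from the triangle inequality for $\|\cdot\|_F$ together with the same multiplicativity applied entrywise to $q_{\rm in}X_{\rm st}$ and $q_{\rm st}X_{\rm in}$.

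There is no genuine analytic obstacle here; the only point requiring care is the bookkeeping with the total order — one must observe that, because the standard parts coincide exactly, the comparison necessarily descends to the infinitesimal level, and that the degenerate case $X_{\rm st}=O$ is covered automatically (both standard parts being zero, and $\epsilon^2=0$ collapsing the product $|\dot q|_D\,v(\dot X)$). The accompanying remark that $|\cdot|_D$ induces a metric on $\dot{\mathbb Q}$, namely $(\dot p,\dot q)\mapsto|\dot p-\dot q|_D$, is then immediate from the nonnegativity and triangle inequality of the quaternion modulus applied separately to standard and infinitesimal parts, exactly as for $\rho$ in Definition \ref{Def-Qnorm}.
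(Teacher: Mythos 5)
Your proof is correct and follows essentially the same route as the paper's: expand $\dot q\dot X$, note that $\|q_{\rm st}X_{\rm st}\|_F=|q_{\rm st}|\|X_{\rm st}\|_F$ by multiplicativity of the quaternion modulus, bound the infinitesimal part by the triangle inequality, and conclude via the total order on dual numbers. Your explicit observation that the standard parts coincide exactly (so the comparison necessarily descends to the infinitesimal level) is a slightly more careful rendering of the paper's appeal to "the definitions of the total order," but it is the same argument.
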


\begin{proof}
It is obvious that $\dot q\dot X=q_{\rm st}X_{\rm st}+(q_{\rm in}X_{\rm st}+q_{\rm st}X_{\rm in})\epsilon$. By the definition of $v(\cdot)$, we have
$v(\dot q\dot X)=\|q_{\rm st}X_{\rm st}\|_F+\|q_{\rm in}X_{\rm st}+q_{\rm st}X_{\rm in}\|_F\epsilon$. Since $\|q_{\rm st}X_{\rm st}\|_F=|q_{\rm st}|\|X_{\rm st}\|_F$ and $\|q_{\rm in}X_{\rm st}+q_{\rm st}X_{\rm in}\|_F\leq \|q_{\rm in}X_{\rm st}\|_F+\|q_{\rm st}X_{\rm in}\|_F=|q_{\rm in}|\|X_{\rm st}\|_F+|q_{\rm st}|\|X_{\rm in}\|_F$, by the definitions of the total order of dual numbers and $v(\cdot)$, we know
$$
v(\dot q\dot X)\leq |q_{\rm st}|\|X_{\rm st}\|_F+(|q_{\rm in}|\|X_{\rm st}\|_F+|q_{\rm st}|\|X_{\rm in}\|_F)\epsilon=|\dot q|_Dv(\dot X).
$$
We obtain the desired result and complete the proof.
\qed\end{proof}
It should be pointed that, for any $\dot q=q_{\rm st}+q_{\rm in}\epsilon\in \dot{\mathbb{Q}}$ with $q_{\rm st}=0$ or $q_{\rm in}=0$, we have $v(\dot q\dot X)=|\dot q|v(\dot X)$. In fact, when $q_{\rm in}=0$, $|\dot q|=|q_{\rm st}|$ and $\dot q\dot X=q_{\rm st}X_{\rm st}+q_{\rm st}X_{\rm in}\epsilon$, it is obvious that $v(\dot q\dot X)=\|q_{\rm st}X_{\rm st}\|_F+\|q_{\rm st}X_{\rm in}\|_F\epsilon=|q_{\rm st}|\|X_{\rm st}\|_F+|q_{\rm st}|\|X_{\rm in}\|_F\epsilon=|\dot q|v(\dot X)$; when $q_{\rm st}=0$, then $\dot q\dot X=q_{\rm in}X_{\rm st}\epsilon$, since $\epsilon^2=0$. By (\ref{e7}), it holds that $|\dot q|=|q_{\rm in}|\epsilon$. Moreover, by Definition \ref{Def-Qnorm}, we know
$$v(\dot q\dot X)=\|q_{\rm in}X_{\rm st}\|_{F}\epsilon=|q_{\rm in}|\|X_{\rm st}\|_{F}\epsilon=|q_{\rm in}|(\|X_{\rm st}\|_{F}+\|X_{\rm in}\|_{F}\epsilon)\epsilon=|\dot q|v(\dot X).$$


\begin{proposition}\label{Prop-3}
For any $\dot A = A_{\rm st} + A_{\rm in} \epsilon \in {\dot{\mathbb Q}}^{m \times n}$, it holds that $\|\dot A\|_F\leq v(\dot A)$.
\end{proposition}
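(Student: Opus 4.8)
The plan is to split into the two cases that appear in the definition \eqref{FNorm-DQM} of $\|\cdot\|_F$ on $\dot{\mathbb Q}^{m\times n}$, and in each case compare the standard and infinitesimal parts of $\|\dot A\|_F$ and $v(\dot A)$ using the total order on dual numbers.

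First I would treat the case $A_{\rm st}=O$. Here \eqref{FNorm-DQM} gives $\|\dot A\|_F=\|A_{\rm in}\|_F\epsilon$, while by definition $v(\dot A)=\|A_{\rm st}\|_F+\|A_{\rm in}\|_F\epsilon=\|A_{\rm in}\|_F\epsilon$ as well; so in fact $\|\dot A\|_F=v(\dot A)$ and the inequality holds trivially.

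Next, for the case $A_{\rm st}\neq O$, I would invoke Proposition \ref{Prop-2} to write $\|\dot A\|_F=\|A_{\rm st}\|_F+\frac{\langle A_{\rm st},A_{\rm in}\rangle_R}{\|A_{\rm st}\|_F}\epsilon$, which has the same standard part $\|A_{\rm st}\|_F$ as $v(\dot A)$. Hence, by the total order on $\dot{\mathbb R}$, it suffices to compare the infinitesimal parts, i.e.\ to show $\frac{\langle A_{\rm st},A_{\rm in}\rangle_R}{\|A_{\rm st}\|_F}\leq\|A_{\rm in}\|_F$. This is immediate from Proposition \ref{Prop-4}, which gives $\langle A_{\rm st},A_{\rm in}\rangle_R\leq|\langle A_{\rm st},A_{\rm in}\rangle_R|\leq\|A_{\rm st}\|_F\|A_{\rm in}\|_F$; dividing by $\|A_{\rm st}\|_F>0$ yields the claim, and therefore $\|\dot A\|_F\leq v(\dot A)$.

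There is no real obstacle here: the statement reduces, via Proposition \ref{Prop-2}, to the Cauchy--Schwarz-type bound of Proposition \ref{Prop-4}, and the only point requiring a little care is to keep track of the definition of the total order (comparison of standard parts first, then infinitesimal parts) and to handle the degenerate case $A_{\rm st}=O$ separately, since there the formula in Proposition \ref{Prop-2} does not apply.
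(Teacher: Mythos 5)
Your proposal is correct and follows essentially the same route as the paper: case split on $A_{\rm st}=O$ versus $A_{\rm st}\neq O$, then Proposition \ref{Prop-2} to identify the standard and infinitesimal parts of $\|\dot A\|_F$, and Proposition \ref{Prop-4} to bound the infinitesimal part. You merely spell out the comparison under the total order more explicitly than the paper does.
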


\begin{proof}
If $A_{\rm st}=0$, it follows from the definitions of $\|\cdot\|_F$ and $v(\cdot)$. If $A_{\rm st}\neq0$, since $\langle A_{\rm st},A_{\rm in}\rangle_R\leq \|A_{\rm st}\|_F\|A_{\rm in}\|_F$ which from Proposition \ref{Prop-4}, by Proposition \ref{Prop-2}, we obtain the desired result and complete the proof.
\qed\end{proof}

\section{Least-squares problems of dual quaternion equation in the sense of metric function ${\bf\rho}$}\label{Sect4-0}
In this section, we study the algorithmic problem of the general approximate solution of the dual quaternion overdetermined equation and the sparse (or low-rank) approximate solution of this equation. Although the former is a special case of the latter, for the sake of completeness, we will start this section with the study of the general approximate solution of the dual quaternion overdetermined equation.

\subsection{Bi-level program formulation}\label{Sec4-1}
It is well-known that, due to various reasons, it may not be possible to obtain an exact solution for the equation
\begin{equation}\label{dqe}
\dot A\dot X=\dot B,
\end{equation}
 where $\dot A=A_{\rm st}+A_{\rm in}\epsilon\in \dot{\mathbb{Q}}^{m\times n}$ with $m>n$, $\dot B=B_{\rm st}+B_{\rm in}\epsilon\in \dot{\mathbb{Q}}^{m\times p}$ are given constant matrices, and $\dot X=X_{\rm in}+X_{\rm st}\epsilon\in \dot{\mathbb{Q}}^{n\times p}$ is unknown. Similar to real overdetermined equations, a natural approach for solving approximate solution of the dual quaternion overdetermined equation is to convert it into a least squares problem, in order to obtain an approximate solution in a certain sense. With the help of the metric function $\rho$ introduced in the previous section, in this subsection, we transform the original problem \eqref{dqe} into an unconstrained dual quaternion optimization problem in the following form
\begin{equation}\label{DQEOptim-1}
\begin{array}{cl}
\displaystyle\mathop{\rm min}\limits_{\dot X\in \dot{\mathbb{Q}}^{n\times p}}&\rho(\dot A\dot X,\dot B):=\|A_{\rm st}X_{\rm st}-B_{\rm st}\|_F+\|A_{\rm st}X_{\rm in}+A_{\rm in}X_{\rm st}-B_{\rm in}\|_F\epsilon.
\end{array}
\end{equation}
 Notice that, by the definition of total order for dual real numbers introduced in \cite{QLY21}, the model (\ref{DQEOptim-1}) is closely related to two quaternion optimization problems as follows
\begin{equation}\label{DQEOptim-Prob1}
\displaystyle\mathop{\rm min}\limits_{X_{\rm st}\in \mathbb{Q}^{n\times p}}~\displaystyle\frac{1}{2}\|A_{\rm st}X_{\rm st}-B_{\rm st}\|_F^2
\end{equation}
and
\begin{equation}\label{DQEOptim-Prob2}
\displaystyle\mathop{\rm min}\limits_{X_{\rm st},X_{\rm in}\in \mathbb{Q}^{n\times p}}~\displaystyle\frac{1}{2}\|A_{\rm st}X_{\rm in}+A_{\rm in}X_{\rm st}-B_{\rm in}\|_F^2.
\end{equation}
The models (\ref{DQEOptim-Prob1}) and (\ref{DQEOptim-Prob2}) are both typical quaternion least squares problems. Hence, we call (\ref{DQEOptim-1}) the least-squares problem of dual quaternion equation in the sense of the metric function $\rho$. According to the definitions of $\rho$ and the total order for dual real numbers, we need to find a quaternion matrix pair $(X_{\rm st}^\diamond,X_{\rm in}^\diamond)$, which ensures that $X_{\rm st}^\diamond$ is the global optimal solution of the model (\ref{DQEOptim-Prob1}), while also ensuring that $(X_{\rm st}^\diamond,X_{\rm in}^\diamond)$ must be the optimal solution of problem (\ref{DQEOptim-Prob2}). We must first obtain all solutions of the problem (\ref{DQEOptim-Prob1}) (generally speaking, in the case of ${\rm rank}(A_{\rm st})<n$, the problem (\ref{DQEOptim-Prob1}) has infinite many optimal solutions, which can be seen from the expression (\ref{Step1-X_st-solution}) given later), and then further obtain the optimal solution to the problem (\ref{DQEOptim-Prob2}) from all these optimal solutions already obtained. Denote by $\Xi$ the optimal solution set of problem (\ref{DQEOptim-Prob1}). Then, the second optimization problem (\ref{DQEOptim-Prob2}) can be expressed as the following form
\begin{equation}\label{DQEOptim-2}
\begin{array}{l}
\displaystyle\mathop{\rm min}\limits_{X_{\rm st}\in \Xi,~X_{\rm in}\in \mathbb{Q}^{n\times p}}~\displaystyle\frac{1}{2}\|A_{\rm st}X_{\rm in}+A_{\rm in}X_{\rm st}-B_{\rm in}\|_F^2.
\end{array}
\end{equation}
This solving process is very similar to solving a bi-level program with special structure. 

\subsection{Description of algorithms}\label{Sec4-2}
To facilitate the explanation of the numerical solution method, we first start with solving problems (\ref{DQEOptim-Prob1}) and (\ref{DQEOptim-Prob2}). Generally speaking, it is a very difficult task to obtain a global solution of bi-level programming while ensuring the exact solution of the lower level programming (which corresponds to the standard part of the metric function $\rho(AX,B)$). Fortunately, thanks to the special structure of (\ref{DQEOptim-Prob1}) and Theorem 3.2.1 (also see Theorem 3.3.3) in \cite{WLZZ18}, we know that its optimal solution set can be expressed as follow
\begin{equation}\label{Step1-X_st-solution}
\Xi=\big\{X_{\rm st}=A_{\rm st}^\dag B_{\rm st}+(I_n-A_{\rm st}^\dag A_{\rm st})Z~:~\forall~Z\in \mathbb{Q}^{n\times p}\big\},
\end{equation}
where $A_{\rm st}^\dag$ denotes the Moore-Penrose inverse of the quaternion matrix $A_{\rm st}$, see Definition 1.6.1 in \cite{WLZZ18}.

Consequently, by substituting (\ref{Step1-X_st-solution}) into the objective function of (\ref{DQEOptim-Prob2}), the model (\ref{DQEOptim-Prob2})  can be rewritten as
\begin{equation}\label{DQEOptim-3}
\displaystyle\mathop{\rm min}\limits_{X_{\rm in},Z\in \mathbb{Q}^{n\times p}}~\displaystyle \frac{1}{2}\|A_{\rm st}X_{\rm in}+CZ+D\|_F^2,
\end{equation}
where $C=A_{\rm in}(I_n-A_{\rm st}^\dag A_{\rm st})\in \mathbb{Q}^{m\times n}$ and $D=A_{\rm in}A_{\rm st}^\dag B_{\rm st}-B_{\rm in}\in \mathbb{Q}^{m\times p}$, which is a quaternion version of quadratic programming with coupled variables. Write $G=[A_{\rm st},C]\in \mathbb{Q}^{m\times 2n}$ and $W=[X_{\rm in};Z]\in \mathbb{Q}^{2n\times p}$. The optimization problem (\ref{DQEOptim-3}) can be further expressed as
\begin{equation}\label{DQEOptim-4}
\displaystyle\mathop{\rm min}\limits_{W\in \mathbb{Q}^{2n\times p}}~\displaystyle \frac{1}{2}\|GW+D\|_F^2.
\end{equation}
By Theorem 3.2.1 in \cite{WLZZ18} again, the optimal solution of (\ref{DQEOptim-4}) can be expressed as follows
\begin{equation}\label{Step1-W_st-solution}
W^\diamond=-G^\dag D+(I_{2n}-G^\dag G)U,~~~~\forall~U\in \mathbb{Q}^{2n\times p},
\end{equation}
where $G^\dag$ denotes the Moore-Penrose inverse of the quaternion matrix $G$.

Considering the high computational cost required for calculating $G^\dag$, we use the iterative schemes of proximity operator to solve  (\ref{DQEOptim-4}). We assume ${\rm rank}(G)=r$, which implies that there exists an orthogonal matrix $V\in \mathbb{Q}^{2n\times 2n}$ such that $G=[G_1,O]V$, where $G_1\in \mathbb{Q}^{m\times r}$ is full column rank. Denote $W_1=V_1W$ and $W_2=V_2W$, where $V_1\in \mathbb{Q}^{r\times 2n}$ and $V_2\in \mathbb{Q}^{(2n-r)\times 2n}$ are matrices composed of the first $r$ row vectors and the last $2n-r$ row vectors in $V$, respectively. Then (\ref{DQEOptim-4}) can be rewritten as
\begin{equation}\label{DQEOptim-5}
\displaystyle\mathop{\rm min}\limits_{W_1\in \mathbb{Q}^{r\times p}}~\displaystyle f_0(W_1):=\frac{1}{2}\|G_1W_1+D\|_F^2.
\end{equation}

We apply proximal point minimization algorithm to solve \eqref{DQEOptim-5}. For the obtained $k$-th iterate $W_1^k$, we compute the next iterate $W_1^{k+1}$ via solving
\begin{equation}\label{k-Iterative}
\displaystyle W_1^{k+1}=\mathop{\rm argmin}\limits_{W_1\in \mathbb{Q}^{r\times p}}\;f_0(W_1)+\frac{\tau}{2} \big\|W_1-W_1^k\big\|_F^2,
\end{equation}
where $\tau>0$ is a proximity parameter. By the optimality condition of (\ref{k-Iterative}) and Proposition \ref{Prop-3}, it holds that $
G_1^*\big(G_1W_1^{k+1}+D\big)+\tau\big(W_1^{k+1}-W_1^k\big)=0$, which implies
\begin{equation}\label{X-subprob-k-Solu}
\displaystyle W_1^{k+1}=(\tau I_r+G_1^*G_1)^{-1}\big\{\tau W_1^k-G_1^*D\big\}.
\end{equation}
The complete iterative process is listed in Algorithm \ref{PrOalg-1}.

\begin{algorithm}[!htb]
	\caption{(Implementable proximal minimization algorithm for \eqref{DQEOptim-5}).}\label{PrOalg-1}
	\begin{algorithmic}[1]
		\STATE {\bf Input:} $\tau>0$,  $\varepsilon>0$ and starting point $W_1^0\in \mathbb{Q}^{r\times p}$.
				\FOR{$k=0,1,2,\cdots$}
		\STATE Compute $W_1^{k+1}$ via \eqref{X-subprob-k-Solu}.
        \STATE If $\big\|W_1^{k+1}-W_1^{k}\big\|_F\leq \varepsilon$ is satisfied, {\bf output}: Optimal solution $W_1^\diamond=W_1^{k+1}$.
		\ENDFOR
	\end{algorithmic}
\end{algorithm}
After obtaining $W_1^\diamond\in \mathbb{Q}^{r\times p}$, take any $W_2^\diamond\in \mathbb{Q}^{(2n-r)\times p}$, and let $W^\diamond=V^*[W_1^\diamond;W_2^\diamond]$. Let $X_{\rm in}^\diamond=W^\diamond(1:n;:)$ and $X_{\rm st}^\diamond=A_{\rm st}^\dag B_{\rm st}+(I_n-A_{\rm st}^\dag A_{\rm st})Z^\diamond$ with $Z^\diamond=W^\diamond(n+1:2n;:)$. Through this process, we obtain an optimal solution $(X_{\rm st}^\diamond,X_{\rm in}^\diamond)$ of (\ref{DQEOptim-2}), which means, together with the fact $X_{\rm st}^\diamond\in \Xi^\diamond$, that $\dot X^\diamond=X_{\rm st}^\diamond+X_{\rm in}^\diamond\epsilon$ is an optimal solution of (\ref{DQEOptim-1}).

Now, we study the algorithmic problem of approximate solutions for dual quaternion overdetermined equations under certain regularity conditions (such as low-rankness, sparsity).  Following the above basic idea for handing with the dual quaternion least squares problem, a dual quaternion matrix least squares problem with an infinitesimal matrix regularization term can be similarly transformed into the optimization problems as follows
\begin{equation}\label{DQEOptim-S2}
\left\{
\begin{array}{l}
\displaystyle\mathop{\rm min}\limits_{X_{\rm st}\in \mathbb{Q}^{n\times p}}~\displaystyle\frac{1}{2}\|A_{\rm st}X_{\rm st}-B_{\rm st}\|_F^2,\\
\displaystyle\mathop{\rm min}\limits_{X_{\rm st},X_{\rm in}\in \mathbb{Q}^{n\times p}}~\displaystyle\frac{1}{2}\|A_{\rm st}X_{\rm in}+A_{\rm in}X_{\rm st}-B_{\rm in}\|_F^2+\alpha \varphi(X_{\rm in}),
\end{array}
\right.
\end{equation}
where $\varphi(X_{\rm in})$ is a regularization term used to characterize the feature of $X_{\rm in}$, and $\alpha>0$ is a parameter.

 Similar to solving (\ref{DQEOptim-1}), by the optimal solution set expression $\Xi$ of the first problem in (\ref{DQEOptim-S2}), the second optimization problem in (\ref{DQEOptim-S2}) can be converted into
 \begin{equation}\label{DQEOptim-31}
\displaystyle\mathop{\rm min}\limits_{X_{\rm in},Z\in \mathbb{Q}^{n\times p}}~\displaystyle g(X_{\rm in},Z):=\frac{1}{2}\|A_{\rm st}X_{\rm in}+CZ+D\|_F^2+\alpha \varphi(X_{\rm in}),
\end{equation}
where $C$ and $D$ are same to ones in (\ref{DQEOptim-3}). Moreover, the problem (\ref{DQEOptim-31}) is equivalent to
 \begin{equation}\label{DQEOptim-3AC}
 \begin{array}{cl}
\displaystyle\mathop{\rm min}\limits_{X_{\rm in}\in \mathbb{Q}^{n\times p},W\in \mathbb{Q}^{2n\times p}}&\displaystyle \frac{1}{2}\|GW+D\|_F^2+\alpha \varphi(X_{\rm in})\\
{\rm s.t.}&X_{\rm in}=HW,
\end{array}
\end{equation}
where $G$ and $W$ are same to ones in (\ref{DQEOptim-4}), and $H=[I_n,O]\in \mathbb{Q}^{n\times 2n}$.

Considering that $X_{\rm in}$ often exhibits low-rankness in many practical problems, we focus on the algorithmic problem of (\ref{DQEOptim-3AC}) with $\varphi(X_{\rm in})={\rm rank}(X_{\rm in})$. For general form of $\varphi(X_{\rm in})$,  as long as the relevant $X_{\rm in}$-subproblem (see below) has closed-form solution, our algorithm design idea is still feasible.
To efficiently minimize (\ref{DQEOptim-3AC}) with rank function ${\rm rank}(\cdot)$, by utilizing the nuclear norm of quaternion matrices, 
 we relax (\ref{DQEOptim-3AC}) into
\begin{equation}\label{DQEOptim-3ACR}
 \begin{array}{cl}
\displaystyle\mathop{\rm min}\limits_{X_{\rm in}\in \mathbb{Q}^{n\times p},W\in \mathbb{Q}^{2n\times p}}&\displaystyle \frac{1}{2}\|GW+D\|_F^2+\alpha\|X_{\rm in}\|_\circ\\
{\rm s.t.}&X_{\rm in}=HW,
\end{array}
\end{equation}
where $\|X_{\rm in}\|_\circ$ denotes the nuclear norm of quaternion matrix $X_{\rm in}$, which can approximately characterize the low-rankness of $X_{\rm in}$. Inspired by the method in \cite{DLPY17}, we proposed a quaternion matrix version of Jacobi-Proximal ADMM algorithm to solve (\ref{DQEOptim-3ACR}). Write $f(X_{\rm in})=\alpha \|X_{\rm in}\|_\circ$ and $g(W)=\frac{1}{2}\|GW+D\|_F^2$.
Denote
$$
\mathcal{L}(X_{\rm in},W,T)= f(X_{\rm in})+g(W)-\langle X_{\rm in}-HW,T\rangle_R+\frac{\rho}{2}\|X_{\rm in}-HW\|_F^2.
$$
For given $k$-th iterate $(X_{\rm in}^k,W^k,T^k)$, we first update $X_{\rm in}$ and $W$ in parallel as follows:
\begin{equation}\label{XW-k-th}
\left\{
\begin{array}{l}
\displaystyle X_{\rm in}^{k+1}=\mathop{\arg\min}\limits_{X_{\rm in}}~f(X_{\rm in})+\frac{\rho}{2}\|X_{\rm in}-HW^k-(1/\rho)T^k\|_F^2+\frac{\tau_X}{2}\|X_{\rm in}-X_{\rm in}^k\|_F^2,\\
\displaystyle W^{k+1}=\mathop{\arg\min}\limits_{W}~g(W)+\frac{\rho}{2}\|X_{\rm in}^k-HW-(1/\rho)T^k\|_F^2+\frac{\tau_W}{2}\|W-W^k\|_F^2.
\end{array}
\right.
\end{equation}
It is obvious that solving the first problem in (\ref{XW-k-th}), i.e., $X_{\rm in}$-subproblem, is equivalent to solving
\begin{equation}\label{X-subProb}
\displaystyle X_{\rm in}^{k+1}=\mathop{\arg\min}\limits_{X_{\rm in}}~\alpha\|X_{\rm in}\|_\circ+\frac{\tau_X+\rho}{2}\left\|X_{\rm in}-\tilde X^k\right\|_F^2,
\end{equation}
where $\tilde X^k=\frac{\rho HW^k+T^k+\tau_X X^k_{\rm in}}{\rho+\tau_X}\in \mathbb{Q}^{n\times p}$. By Theorem 2 (The quaternion matrix singular value thresholding operator theorem) in \cite{MKL20}, we know that the optimal solution of (\ref{X-subProb}) is given by
\begin{equation}\label{X-subProb-Solution}
 X_{\rm in}^{k+1}=U_r\Sigma_r\Big(\frac{\alpha}{\rho+\tau_X}\Big) V_r^*,
\end{equation}
where $U_r=[{\bf u}_1,\ldots,{\bf u}_r]\in \mathbb{Q}^{n\times r}, V_r=[{\bf v}_1,\ldots,{\bf v}_r]\in \mathbb{Q}^{p\times r}$ come from the compact QSVD $\tilde X^k=U_r\Sigma_rV_r^*$ with $\Sigma_r={\rm diag}(\sigma_1,\ldots,\sigma_r)$, and $\Sigma_r(\delta)={\rm daig}({\rm max}(\sigma_1-\delta,0),\ldots, {\rm max}(\sigma_r-\delta,0))$ for $\delta>0$.

Moreover, since $W^{k+1}$ is the optimal solution of $W$-subproblem, by Proposition \ref{Prop-AB3}, it holds that
$$
G^*(GW^{k+1}+D)+\rho H^*\big(HW^{k+1}-X_{\rm in}^k+(1/\rho)T^k\big)+\tau_W\big(W^{k+1}-W^k\big)=0,
$$
which implies
\begin{equation}\label{Wk-solution}
W^{k+1}=\big(\tau_W I_{2n}+G^*G+\rho H^* H\big)^{-1}\big\{H^*(\rho X_{\rm in}^k-T^k)+\tau_W W^k-G^* D\big\}.
\end{equation}

After obtaining $(X_{\rm in}^{k+1}, W^{k+1})$, we then update the Lagrange multiplier $T$ as follows:
\begin{equation}\label{Update-T-k}
T^{k+1}=T^k-\gamma \rho(X_{\rm in}^{k+1}-HW^{k+1}),
\end{equation}
where $\gamma>0$.

We summarize the updating schemes for (\ref{DQEOptim-3ACR}) in Algorithm \ref{alg-2}.
\begin{algorithm}[!htb]
	\caption{(Implementable Jacobi-Proximal ADMM algorithm for \eqref{DQEOptim-3ACR}).}\label{alg-2}
	\begin{algorithmic}[1]
		\STATE {\bf Input:} $\tau_X>0$, $\tau_W>0$, $\rho>0$, $\gamma>0$, $\varepsilon>0$ and starting point $(X_{\rm in}^0,W^0,T^0)$.
				\FOR{$k=0,1,2,\cdots$}
		\STATE Compute $(X_{\rm in}^{k+1},W^{k+1})$ via \eqref{X-subProb-Solution} and (\ref{Wk-solution}).
		\STATE Update $T^{k+1}$ via \eqref{Update-T-k}.
        \STATE If $\big\|(X_{\rm in}^{k+1},W^{k+1})-(X_{\rm in}^{k},W^{k})\big\|_F\leq \varepsilon$ is satisfied, {\bf output}: Optimal solution $(X_{\rm in}^\diamond,W^\diamond)=(X_{\rm in}^{k+1},W^{k+1})$.
		\ENDFOR
	\end{algorithmic}
\end{algorithm}

After obtaining  $(X_{\rm in}^\diamond,W^\diamond)$, it is easy to see that $X^\diamond=X_{\rm st}^\diamond+X_{\rm in}^\diamond \epsilon$ with $X_{\rm st}^\diamond=A_{\rm st}^\dag B_{\rm st}+(I_n-A_{\rm st}^\dag A_{\rm st})W^\diamond(n+1:2n;:)$ is a desired least squares solution of dual quaternion equations under low-rank regularity condition.

\section{Convergence analysis}\label{Conanalysis}
In this section, we investigate the convergence of Algorithm \ref{PrOalg-1} and Algorithm \ref{alg-2}.
\subsection{Convergence  of Algorithm \ref{PrOalg-1}}
Let $W_1^\diamond\in \mathbb{Q}^{r\times p}$. We say that $W_1^\diamond$ is a stationary point of (\ref{DQEOptim-5}), if $G^*\big(GW_1^\diamond+D\big)=0$. Notice that, since $f_0$ is a strictly convex function on $\mathbb{Q}^{r\times p}$, which is due to the fact that $G_1$ is full column rank,  $W_1^\diamond$ is the unique global optimal solution of the problem (\ref{DQEOptim-5}). Although the proof of the convergence property of Algorithm \ref{PrOalg-1} is common, for the sake of completeness, we still provide its proof here.
\begin{proposition}\label{Conve-Prop1}
Let $\big\{W_1^{k}\big\}$ be the sequence generated by Algorithm \ref{PrOalg-1}. For any positive integer $k$, we have
\begin{equation}\label{Conve-Prop1-e01}
f_0(W_1^{k+1})+\frac{\tau}{2} \big\|W_1^{k+1}-W_1^k\big\|_F^2\leq f_0(W_1^k).
\end{equation}
\end{proposition}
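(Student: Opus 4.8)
The plan is to use nothing beyond the defining optimality property of the iterate $W_1^{k+1}$ produced by \eqref{k-Iterative}. Introduce the proximal objective $\phi_k(W_1) := f_0(W_1) + \frac{\tau}{2}\|W_1 - W_1^k\|_F^2$. First I would observe that $\phi_k$ is strongly convex: $f_0$ is convex (strictly convex, in fact, since $G_1$ has full column rank), while $\frac{\tau}{2}\|\,\cdot\, - W_1^k\|_F^2$ with $\tau>0$ is strongly convex; hence $\phi_k$ has a unique minimizer, which is precisely the closed-form point given in \eqref{X-subprob-k-Solu}. In particular $W_1^{k+1}$ minimizes $\phi_k$ over $\mathbb{Q}^{r\times p}$, so $\phi_k(W_1^{k+1}) \le \phi_k(W_1)$ for every $W_1 \in \mathbb{Q}^{r\times p}$.

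Second, I would simply evaluate this inequality at the particular feasible point $W_1 = W_1^k$. Since $\|W_1^k - W_1^k\|_F = 0$, the proximal term vanishes and $\phi_k(W_1^k) = f_0(W_1^k)$. Therefore
\begin{equation*}
f_0(W_1^{k+1}) + \frac{\tau}{2}\big\|W_1^{k+1} - W_1^k\big\|_F^2 = \phi_k(W_1^{k+1}) \le \phi_k(W_1^k) = f_0(W_1^k),
\end{equation*}
which is exactly \eqref{Conve-Prop1-e01}.

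I do not expect any genuine obstacle here: the statement is a one-line consequence of optimality, and the only point that merits an explicit word is that the proximal iterate is well defined (so that the $\argmin$ in \eqref{k-Iterative} is meaningful), which is covered by the strong convexity of $\phi_k$ noted above. I would also remark in passing that strong convexity of $f_0$ with modulus $\sigma_{\rm min}(G_1)^2 > 0$ even yields the sharper descent estimate $f_0(W_1^{k+1}) + \frac{\tau + \sigma_{\rm min}(G_1)^2}{2}\|W_1^{k+1} - W_1^k\|_F^2 \le f_0(W_1^k)$, but the stated inequality needs only convexity and drops out immediately from the comparison above.
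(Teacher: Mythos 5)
Your proof is correct and is exactly the argument the paper uses: the paper's own proof is the one-line observation that $W_1^{k+1}$ minimizes the proximal objective in \eqref{k-Iterative}, so comparing its value with that at the feasible point $W_1^k$ gives \eqref{Conve-Prop1-e01}. Your write-up just makes the same comparison explicit (and your aside about well-definedness via strong convexity is a harmless, correct addition).
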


\begin{proof}
It follows immediately from the fact that $W_1^{k+1}$ is the optimal solution of (\ref{k-Iterative}) at the $k$-th update.
\qed\end{proof}

From (\ref{Conve-Prop1-e01}), we know that $\big\{f_0(W_1^k)\big\}_{k=0}^\infty$ is nonincreasing, i.e., $f_0(W_1^{k+1})\leq f_0(W_1^k)$ for any positive integer $k$. Consequently, from $f_0(W_1^{k})\leq f_0(W_1^0)$, we know $\|G_1W^k_1\|_F\leq\|G_1W^k_1+D\|_F+\|D\|_F\leq \sqrt{2f_0(W_1^0)}+\|D\|_F$. By Proposition \ref{QM-Ineq}, it holds that
$$
\sigma_{\rm min}(G_1)\|W^k_1\|_F\leq \|G_1W^k_1\|_F\leq \sqrt{2f_0(W_1^0)}+\|D\|_F,
$$
which implies that $\big\{W_1^k\big\}_{k=0}^\infty$ is bounded, since $\sigma_{\rm min}(G_1)>0$ which from the assumption $G_1$ is full column rank.

\begin{theorem}\label{Golbal-ConTh-1}
Let $\big\{W_1^k\big\}_{k=0}^\infty$ be a sequence generated by
Algorithm \ref{PrOalg-1}. We have

(a) $\mathop{\rm lim}\limits_{k\rightarrow \infty}\big\|W_1^{k+1}-W_1^k\big\|_F=0$.

(b) any cluster point $W_1^\diamond$ of $\big\{W_1^k\big\}_{k=0}^\infty$ is a stationary point of (\ref{DQEOptim-5}).
\end{theorem}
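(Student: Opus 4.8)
The plan is to combine the sufficient–decrease estimate of Proposition~\ref{Conve-Prop1} with the nonnegativity of $f_0$ and with the boundedness of $\{W_1^k\}_{k=0}^\infty$ already established above.

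For part~(a), I would telescope the inequality \eqref{Conve-Prop1-e01}. Summing it over $k=0,1,\dots,N$ gives
$$
\frac{\tau}{2}\sum_{k=0}^{N}\big\|W_1^{k+1}-W_1^k\big\|_F^2\;\le\;f_0(W_1^0)-f_0(W_1^{N+1})\;\le\;f_0(W_1^0),
$$
where the last inequality uses $f_0(W_1)=\tfrac{1}{2}\|G_1W_1+D\|_F^2\ge0$. Letting $N\to\infty$, the series $\sum_{k\ge0}\|W_1^{k+1}-W_1^k\|_F^2$ converges (its sum being at most $2f_0(W_1^0)/\tau$), hence its general term tends to $0$, which is exactly assertion~(a).

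For part~(b), let $W_1^\diamond$ be a cluster point of $\{W_1^k\}_{k=0}^\infty$ and pick a subsequence with $W_1^{k_j}\to W_1^\diamond$. By~(a) we have $\|W_1^{k_j}-W_1^{k_j-1}\|_F\to0$, so the shifted subsequence also satisfies $W_1^{k_j-1}\to W_1^\diamond$. Now recall that, by the optimality condition of \eqref{k-Iterative}, each iterate obeys
$$
G_1^*\big(G_1W_1^{k+1}+D\big)+\tau\big(W_1^{k+1}-W_1^k\big)=0,\qquad k\ge0 .
$$
Specialising this to $k=k_j-1$ and letting $j\to\infty$, the left-hand side is continuous (indeed affine) in the pair $(W_1^{k_j},W_1^{k_j-1})$, so it passes to the limit and yields $G_1^*(G_1W_1^\diamond+D)=0$, which is the stationarity condition for \eqref{DQEOptim-5}. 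Since $G_1$ is full column rank, $f_0$ is strictly convex, so $W_1^\diamond$ is in fact the unique global minimizer of \eqref{DQEOptim-5}; together with the boundedness of $\{W_1^k\}$, this further implies that the whole sequence converges to $W_1^\diamond$.

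I do not expect a genuine obstacle: the argument is the standard convergence proof of the proximal–point iteration applied to a strictly convex quadratic. The only steps needing a little care are the telescoping/summability bound in~(a) and, in~(b), the observation that~(a) forces the shifted subsequence $\{W_1^{k_j-1}\}$ to inherit the same limit $W_1^\diamond$, which is precisely what makes the passage to the limit in the optimality equation legitimate.
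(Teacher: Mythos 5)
Your proposal is correct and follows essentially the same route as the paper: part (a) by telescoping the sufficient-decrease inequality of Proposition~\ref{Conve-Prop1} and using $f_0\ge 0$ to get summability, and part (b) by passing to the limit in the first-order optimality condition \eqref{Optimcondition-Sub0} along a convergent subsequence, with (a) guaranteeing the shifted iterates share the same limit. The closing observation that strict convexity of $f_0$ (from $G_1$ having full column rank) upgrades this to convergence of the whole sequence is a correct bonus consistent with the paper's remarks preceding the theorem.
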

\begin{proof}
We first prove statement (a). By Proposition \ref{Conve-Prop1}, we know that $\big\{f_0(W_1^k)\big\}$ is nonincreasing, which implies, together with the fact $f_0(W_1^k)\geq 0$ for any positive integer $k$, that $\lim_{k\rightarrow\infty}f_0(W_1^k)=f_0^\diamond$ exists. By Proposition \ref{Conve-Prop1} again, for any positive integer $N$, we have
\begin{equation}\label{Conve-Prop1-e4}
\frac{\tau}{2}\sum_{k=0}^N  \big\|W_1^{k+1}-W_1^k\big\|_F^2\leq f_0(W_1^0)-f_0(W_1^{N+1})\leq f_0(W_1^0),
\end{equation}
which implies, by letting $N\rightarrow +\infty$, that
\begin{equation}\label{Conve-Prop1-e5}
\sum_{k=0}^\infty\big\|W_1^{k+1}-W_1^k\big\|_F^2<+\infty,
\end{equation}
which implies that the statement (a) holds.

We now prove statement (b). Since the sequence $\big\{W_1^k\big\}_{k=0}^\infty$ is bounded, we know that a cluster point of $\big\{W_1^k\big\}_{k=0}^\infty$ exists. Suppose that $W_1^\diamond$  is a cluster point of the sequence $\big\{W_1^k\big\}_{k=0}^\infty$ and let $\big\{W_1^{k_i}\big\}_{i=1}^\infty$ be a convergent subsequence such that $
\lim_{i\rightarrow \infty}W_1^{k_i}=W_1^\diamond$. Since $W_1^{k_i}$ is the optimal solution of subproblem (\ref{k-Iterative}) with $k=k_i-1$, from its first-order optimality condition, we know
\begin{equation}\label{Optimcondition-Sub0}
G_1^*\big(G_1W_1^{k_i}+D\big)+\tau\big(W_1^{k_i}-W_1^{k_i-1}\big)=0.
\end{equation}
for  any $i=1,2,\ldots$. By letting $i\rightarrow \infty$, we know, together with (a), that $G_1^*\big(G_1W_1^\diamond+D\big)=0$, which means that $W_1^\diamond$ is a stationary point of (\ref{DQEOptim-5}). We complete the proof.
\qed\end{proof}

\subsection{Convergence of Algorithm \ref{alg-2}}
Let $(X^\diamond_{\rm in},W^\diamond, T^\diamond)\in \mathbb{H}:=\mathbb{Q}^{n\times p}\times\mathbb{Q}^{2n\times p}\times \mathbb{Q}^{n\times p}$. We say that $(X^\diamond_{\rm in},W^\diamond,T^\diamond)$ satisfying
\begin{equation}\label{OrgPKKT}
\left\{
\begin{array}{l}
T^\diamond\in \partial f(X^\diamond_{\rm in}),~-H^*T^\diamond=\nabla g(W^\diamond),
\\
X^\diamond_{\rm in}-HW^\diamond=0
\end{array}
\right.
\end{equation}
is a KKT pair of (\ref{DQEOptim-3ACR}). Here, $T^\diamond$ is celled a Lagrange multiplier associated with $(X^\diamond_{\rm in},W^\diamond)$. 
Through of this paper, we assume that the KKT pair set $\Xi$ of (\ref{DQEOptim-3ACR}) is nonempty, i.e.,
$$
\Xi:=\{(X^\diamond_{\rm in},W^\diamond, T^\diamond)\in \mathbb{H}~:~T^\diamond\in \partial f(X^\diamond_{\rm in}),-H^*T^\diamond=\nabla g(W^\diamond), X^\diamond_{\rm in}-HW^\diamond=0\}\neq\emptyset.$$
For given parameters $\tau_X, \tau_W, \rho, \gamma>0$, denote $Q={\rm diag}\big((\tau_X+\rho)I_n,\tau_W I_{2n}+\rho H^*H, (1/\gamma \rho)I_n\big)$ and
$$
P=\left[\begin{array}{ccc}
(\tau_X+\rho)I_n&0&\displaystyle\frac{1}{\gamma}I_n\\
0&\tau_W I_{2n}+\rho H^*H&\displaystyle\frac{1}{\gamma}H^*\\
\displaystyle\frac{1}{\gamma}I_n&\displaystyle\frac{1}{\gamma}H &\displaystyle\frac{2-\gamma}{\gamma^2\rho}I_n
\end{array}
\right].
$$
It is easy to see that $Q$ is positive definite and $P$ is Hermitian. Moreover, we have

\begin{proposition}\label{P-Positive-Definite} Suppose that the positive parameters $\tau_X, \tau_W, \gamma,\rho$ satisfy $\gamma(\tau_X+\rho)>1$, $\gamma\tau_W>1$ and $2-\gamma-2\gamma \rho>0$. Then, the quaternion Hermitian matrix $P$ is positive definite.
\end{proposition}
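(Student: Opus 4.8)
The plan is to prove positive definiteness directly from the definition, i.e.\ to verify that $\mathbf{z}^{*}P\mathbf{z}>0$ for every nonzero $\mathbf{z}=(\mathbf{x};\mathbf{w};\mathbf{t})$ with $\mathbf{x},\mathbf{t}\in\mathbb{Q}^{n}$ and $\mathbf{w}\in\mathbb{Q}^{2n}$ (the block sizes matching those of $P$). First I would exploit the explicit form $H=[I_{n},O]$, so that $HH^{*}=I_{n}$ and $H^{*}H=\mathrm{diag}(I_{n},O)$; writing $\mathbf{w}=(\mathbf{w}_{1};\mathbf{w}_{2})$ with $\mathbf{w}_{1},\mathbf{w}_{2}\in\mathbb{Q}^{n}$ one has $H\mathbf{w}=\mathbf{w}_{1}$ and $\tau_{W}I_{2n}+\rho H^{*}H=\mathrm{diag}\big((\tau_{W}+\rho)I_{n},\,\tau_{W}I_{n}\big)$. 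Expanding $\mathbf{z}^{*}P\mathbf{z}$ blockwise and using $\mathbf{a}^{*}\mathbf{b}+\mathbf{b}^{*}\mathbf{a}=2\langle\mathbf{a},\mathbf{b}\rangle_{R}$ for quaternion vectors, the quadratic form reduces to
$$
\mathbf{z}^{*}P\mathbf{z}=\tau_{W}\|\mathbf{w}_{2}\|_{F}^{2}+\Phi,\qquad \Phi:=(\tau_{X}+\rho)\|\mathbf{x}\|_{F}^{2}+(\tau_{W}+\rho)\|\mathbf{w}_{1}\|_{F}^{2}+\frac{2-\gamma}{\gamma^{2}\rho}\|\mathbf{t}\|_{F}^{2}+\frac{2}{\gamma}\langle\mathbf{x},\mathbf{t}\rangle_{R}+\frac{2}{\gamma}\langle\mathbf{w}_{1},\mathbf{t}\rangle_{R}.
$$
Since $\tau_{W}>0$, the term $\tau_{W}\|\mathbf{w}_{2}\|_{F}^{2}$ is harmless, so it suffices to prove that $\Phi>0$ whenever $\mathbf{x},\mathbf{w}_{1},\mathbf{t}$ do not all vanish.

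Next I would bound the two cross terms by the Cauchy--Schwarz inequality of Proposition \ref{Prop-4} (applied to column vectors): $\langle\mathbf{x},\mathbf{t}\rangle_{R}\geq-\|\mathbf{x}\|_{F}\|\mathbf{t}\|_{F}$ and $\langle\mathbf{w}_{1},\mathbf{t}\rangle_{R}\geq-\|\mathbf{w}_{1}\|_{F}\|\mathbf{t}\|_{F}$, which gives $\Phi\geq\mathbf{v}^{\top}M\mathbf{v}$ with $\mathbf{v}=(\|\mathbf{x}\|_{F},\|\mathbf{w}_{1}\|_{F},\|\mathbf{t}\|_{F})^{\top}\in\mathbb{R}^{3}$, where $M$ is the real symmetric matrix with diagonal entries $\tau_{X}+\rho$, $\tau_{W}+\rho$, $(2-\gamma)/(\gamma^{2}\rho)$, with $(1,2)$-entry $0$, and with $(1,3)$- and $(2,3)$-entries equal to $-1/\gamma$. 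Hence the statement reduces to showing that the $3\times3$ matrix $M$ is positive definite, which I would establish by Sylvester's criterion: the first two leading principal minors are $\tau_{X}+\rho>0$ and $(\tau_{X}+\rho)(\tau_{W}+\rho)>0$, and a short expansion gives
$$
\gamma^{2}\rho\,\det M=(2-\gamma)(\tau_{X}+\rho)(\tau_{W}+\rho)-\rho(\tau_{X}+\rho)-\rho(\tau_{W}+\rho).
$$
Once $M\succ0$, we have $\mathbf{v}^{\top}M\mathbf{v}\geq\lambda_{\min}(M)\|\mathbf{v}\|^{2}>0$ unless $\mathbf{x},\mathbf{w}_{1},\mathbf{t}$ all vanish; combining with $\tau_{W}\|\mathbf{w}_{2}\|_{F}^{2}\geq0$ then yields $\mathbf{z}^{*}P\mathbf{z}>0$ for all nonzero $\mathbf{z}$.

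The main obstacle is the sign of $\det M$, which is exactly where all three hypotheses must be used together. Setting $s=\tau_{X}+\rho$ and $t=\tau_{W}+\rho$, the target is $(2-\gamma)st>\rho(s+t)$. From $\gamma(\tau_{X}+\rho)>1$ and $\gamma\tau_{W}>1$ (the latter forcing $\gamma t=\gamma\tau_{W}+\gamma\rho>1$) one gets $1/s<\gamma$ and $1/t<\gamma$, hence $2\gamma st>s+t$ and therefore $2\gamma\rho st>\rho(s+t)$; on the other hand $2-\gamma-2\gamma\rho>0$ gives $2-\gamma>2\gamma\rho$, so $(2-\gamma)st>2\gamma\rho st>\rho(s+t)$, i.e.\ $\det M>0$. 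An essentially equivalent route, which I would mention as a remark, is to take the Schur complement of $P$ with respect to its block-diagonal leading part $\mathrm{diag}\big((\tau_{X}+\rho)I_{n},\,\tau_{W}I_{2n}+\rho H^{*}H\big)$ (positive definite since $\tau_{X},\tau_{W},\rho>0$); that Schur complement equals $\big[\tfrac{2-\gamma}{\gamma^{2}\rho}-\tfrac{1}{\gamma^{2}}\big(\tfrac{1}{\tau_{X}+\rho}+\tfrac{1}{\tau_{W}+\rho}\big)\big]I_{n}$, and its positivity is again the same parameter estimate. The blockwise expansion and the Cauchy--Schwarz step are routine; the parameter inequality for $\det M$ is the only genuinely delicate point.
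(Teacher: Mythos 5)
Your proposal is correct, and it diverges from the paper's argument in its second half. Both proofs begin identically: expand $u^*Pu$ blockwise and bound the two cross terms $\tfrac{2}{\gamma}\langle x,t\rangle_R$ and $\tfrac{2}{\gamma}\langle Hw,t\rangle_R$ from below via the Cauchy--Schwarz inequality of Proposition \ref{Prop-4}. The paper then finishes very bluntly: it discards $\rho\|Hy\|^2$ after using $\|Hy\|\le\|y\|$, applies $\tfrac{2}{\gamma}ab\le\tfrac{1}{\gamma}(a^2+b^2)$ to each cross term, and observes that the three resulting diagonal coefficients are exactly $\tfrac{\gamma(\tau_X+\rho)-1}{\gamma}$, $\tfrac{\gamma\tau_W-1}{\gamma}$ and $\tfrac{2-\gamma-2\gamma\rho}{\gamma^2\rho}$, so each hypothesis is the positivity of one coefficient and the proof ends in one line. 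You instead split $w=(w_1;w_2)$, retain the $\rho\|w_1\|^2$ contribution (so the coefficient of $\|w_1\|^2$ is $\tau_W+\rho$ rather than $\tau_W$), and reduce to positive definiteness of a $3\times 3$ real comparison matrix $M$ via Sylvester's criterion, equivalently to positivity of the scalar Schur complement; the three hypotheses are then combined into the single inequality $(2-\gamma)(\tau_X+\rho)(\tau_W+\rho)>\rho\bigl((\tau_X+\rho)+(\tau_W+\rho)\bigr)$, which your chain $1/s+1/t<2\gamma$ and $2-\gamma>2\gamma\rho$ establishes correctly. Your route is slightly longer and the parameter bookkeeping is more delicate, but it buys something the paper's proof does not: the determinant condition $\det M>0$ (with positive diagonal) is a strictly weaker sufficient condition for $P\succ 0$ than the paper's three hypotheses (e.g.\ $\gamma=\rho=1$, $\tau_X=\tau_W=2$ satisfies it while $2-\gamma-2\gamma\rho>0$ fails), so your argument in effect sharpens the admissible parameter range. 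Do remember to handle, as you did, the degenerate direction $x=w_1=t=0$, $w_2\neq 0$, where $\Phi=0$ and positivity comes only from the $\tau_W\|w_2\|^2$ term.
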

\begin{proof}
 For any $u=(x^\top, y^\top,z^\top)^\top\in (\mathbb{Q}^{n}\times\mathbb{Q}^{2n}\times \mathbb{Q}^{n})\backslash\{0\}$, we have
$$
\begin{array}{lll}
u^*Pu&=&\displaystyle(\tau_X+\rho)\|x\|^2+\tau_W \|y\|^2+\rho \|Hy\|^2+\frac{2-\gamma}{\gamma^2\rho}\|z\|^2\\
&&\displaystyle+\frac{1}{\gamma}(\langle z,x \rangle+\langle x,z \rangle)+\frac{1}{\gamma}(\langle Hy,z \rangle+\langle z, Hy \rangle)\\
&\geq&\displaystyle(\tau_X+\rho)\|x\|^2+\tau_W \|y\|^2+\rho \|Hy\|^2+\frac{2-\gamma}{\gamma^2\rho}\|z\|^2-\frac{2}{\gamma}\|x\|\|z\|-\frac{2}{\gamma}\|Hy\|\|z\|\\
&\geq&\displaystyle(\tau_X+\rho)\|x\|^2+\tau_W \|y\|^2+\frac{2-\gamma}{\gamma^2\rho}\|z\|^2-\frac{2}{\gamma}\|x\|\|z\|-\frac{2}{\gamma}\|y\|\|z\|\\
&\geq&\displaystyle(\tau_X+\rho)\|x\|^2+\tau_W \|y\|^2+\frac{2-\gamma}{\gamma^2\rho}\|z\|^2\displaystyle-\frac{1}{\gamma}(\|x\|^2+\|z\|^2)-\frac{1}{\gamma}(\|y\|^2+\|z\|^2)\\
&=&\displaystyle\frac{\gamma(\tau_X+\rho)-1}{\gamma}\|x\|^2+\frac{\tau_W\gamma-1}{\gamma} \|y\|^2+\frac{2-\gamma-2\gamma \rho}{\gamma^2\rho}\|z\|^2\\
&>&0,
\end{array}
$$
where the first inequality is due to Proposition \ref{Prop-4}, and the second inequality comes from the fact $\|Hy\|\leq \|y\|$ since $H=[I_n,O]$, which means that $P$ is positive definite.
\qed\end{proof}
\begin{lemma}\label{MPMonoto}
Let $\big\{\Theta^k:=\big(X^k_{\rm in}, W^k, T^k\big)\big\}_{k=0}^\infty$ be the sequence generated by Algorithm \ref{alg-2} from any initial point. Then, for any $\Theta^\diamond:=\big(X^\diamond_{\rm in}, W^\diamond, T^\diamond\big)\in \Xi$ and $k\geq 0$, we have
\begin{equation}\label{Lemma-1-e1}
\|\Theta^k-\Theta^\diamond\|_Q^2-\|\Theta^{k+1}-\Theta^\diamond\|_Q^2\geq \|\Theta^k-\Theta^{k+1}\|_P^2.
\end{equation}
\end{lemma}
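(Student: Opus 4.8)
The plan is to read the first-order optimality conditions of the two subproblems in (\ref{XW-k-th}) against the KKT system (\ref{OrgPKKT}) satisfied by $\Theta^\diamond$, exploit the multiplier recursion (\ref{Update-T-k}) to turn the constraint residual into a multiplier increment, and then collapse everything into the $P$-weighted form by the Pythagoras-type identity of Proposition \ref{PthGLS}. The only genuinely analytic inputs are the monotonicity of $\partial f$ under $\langle\cdot,\cdot\rangle_R$ (Proposition \ref{Monoto-Prop}) and the convexity of the quadratic $g$; the rest is bookkeeping.

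First I would record the subproblem optimality conditions. Put $R^{k+1}:=X^{k+1}_{\rm in}-HW^{k+1}$, so that (\ref{Update-T-k}) reads $T^{k+1}=T^k-\gamma\rho R^{k+1}$, equivalently $R^{k+1}=\tfrac1{\gamma\rho}(T^k-T^{k+1})$. From the $X_{\rm in}$-update in (\ref{XW-k-th}) there is some $S^{k+1}\in\partial f(X^{k+1}_{\rm in})$ with
\begin{equation*}
S^{k+1}+\rho\bigl(X^{k+1}_{\rm in}-HW^k-(1/\rho)T^k\bigr)+\tau_X\bigl(X^{k+1}_{\rm in}-X^k_{\rm in}\bigr)=0,
\end{equation*}
while the $W$-update gives, with $\nabla g(W)=G^*(GW+D)$ from Proposition \ref{Prop-AB3},
\begin{equation*}
\nabla g(W^{k+1})+\rho H^*\bigl(HW^{k+1}-X^k_{\rm in}+(1/\rho)T^k\bigr)+\tau_W\bigl(W^{k+1}-W^k\bigr)=0.
\end{equation*}
Because these updates are of Jacobi (parallel) type, the first carries the stale iterate $W^k$ and the second the stale iterate $X^k_{\rm in}$; I would rewrite both by substituting $HW^k=HW^{k+1}-H(W^{k+1}-W^k)$ and $X^k_{\rm in}=X^{k+1}_{\rm in}-(X^{k+1}_{\rm in}-X^k_{\rm in})$, so that the $H$-coupling between the increments becomes explicit, and express $T^k$ through $T^{k+1}$ and $R^{k+1}$.

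Next I would apply Proposition \ref{Monoto-Prop}: since $S^{k+1}\in\partial f(X^{k+1}_{\rm in})$ and $T^\diamond\in\partial f(X^\diamond_{\rm in})$, one has $\langle X^{k+1}_{\rm in}-X^\diamond_{\rm in},\,S^{k+1}-T^\diamond\rangle_R\ge0$; and since $g$ is convex, $\langle W^{k+1}-W^\diamond,\,\nabla g(W^{k+1})-\nabla g(W^\diamond)\rangle_R=\|G(W^{k+1}-W^\diamond)\|_F^2\ge0$. Substituting the rewritten optimality conditions, using $-H^*T^\diamond=\nabla g(W^\diamond)$ and $X^\diamond_{\rm in}=HW^\diamond$ from (\ref{OrgPKKT}) to cancel all $T^\diamond$-terms against $R^{k+1}$, adding the two inequalities, and finally replacing every occurrence of $R^{k+1}$ by $\tfrac1{\gamma\rho}(T^k-T^{k+1})$, I expect to reach a relation asserting that a sum of terms of the types $-(\tau_X+\rho)\langle X^{k+1}_{\rm in}-X^\diamond_{\rm in},X^{k+1}_{\rm in}-X^k_{\rm in}\rangle_R$, $-\langle W^{k+1}-W^\diamond,(\tau_W I_{2n}+\rho H^*H)(W^{k+1}-W^k)\rangle_R$ and $\langle R^{k+1},T^k-T^\diamond\rangle_R$, together with pure-increment cross terms coupling $X^{k+1}_{\rm in}-X^k_{\rm in}$, $W^{k+1}-W^k$ and $T^{k+1}-T^k$, is nonnegative. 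Applying Proposition \ref{PthGLS} blockwise to the first three types produces $\tfrac12\bigl(\|\Theta^k-\Theta^\diamond\|_Q^2-\|\Theta^{k+1}-\Theta^\diamond\|_Q^2\bigr)$ plus a residual quadratic in $\Theta^{k+1}-\Theta^k$; collecting that residual quadratic with the pure-increment cross terms and recognizing the total as $\tfrac12\|\Theta^{k+1}-\Theta^k\|_P^2$ — this is where the $(T,T)$-block $\tfrac{2-\gamma}{\gamma^2\rho}I_n$ and the off-diagonal $H$-couplings of $P$ emerge, the former from combining the $\tfrac1{\gamma\rho}$ of $Q$ with the corrections carried by $R^{k+1}=\tfrac1{\gamma\rho}(T^k-T^{k+1})$ — gives $\tfrac12\|\Theta^k-\Theta^\diamond\|_Q^2-\tfrac12\|\Theta^{k+1}-\Theta^\diamond\|_Q^2\ge\tfrac12\|\Theta^{k+1}-\Theta^k\|_P^2$, which is (\ref{Lemma-1-e1}) after doubling.

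The main obstacle is precisely this last collapse: tracking, through all the $H$-compositions, the cross terms that the Jacobi updates generate between the $X$-, $W$- and $T$-increments, and verifying that they assemble exactly into the off-diagonal blocks of $P$ and that the coefficient of $\|T^{k+1}-T^k\|_F^2$ surviving the Pythagoras step is exactly $\tfrac{2-\gamma}{\gamma^2\rho}$. Keeping the identity $R^{k+1}=\tfrac1{\gamma\rho}(T^k-T^{k+1})$ in play consistently — so that the residual never appears except through multiplier increments — is what makes the terms coalesce into the clean $P$-form. Note that neither the positive definiteness of $P$ (Proposition \ref{P-Positive-Definite}) nor the Cauchy--Schwarz bound of Proposition \ref{Prop-4} is needed for the identity (\ref{Lemma-1-e1}) itself; they enter only afterwards, when (\ref{Lemma-1-e1}) is used to drive the convergence of Algorithm \ref{alg-2}.
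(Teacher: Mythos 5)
Your proposal is correct and follows essentially the same route as the paper: read off the first-order conditions of the two Jacobi subproblems, apply the monotonicity of $\partial f$ and $\nabla g$ against the KKT pair $\Theta^\diamond$, use the update \eqref{Update-T-k} to replace the residual $X^{k+1}_{\rm in}-HW^{k+1}$ by $\tfrac{1}{\gamma\rho}(T^k-T^{k+1})$, and collapse the cross term via Proposition \ref{PthGLS} so that the leftover increment quadratic (including the $\tfrac{2-\gamma}{\gamma^2\rho}$ coefficient and the $H$-couplings) is exactly $\|\Theta^k-\Theta^{k+1}\|_P^2$. The paper organizes the same bookkeeping with the auxiliary quantities $N=[I_n,-H]$, $M=[X_{\rm in};W]$ and $\hat T^k=T^k-\rho(X^{k+1}_{\rm in}-HW^{k+1})$, but the argument is the one you describe.
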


\begin{proof} Since $X^{k+1}_{\rm in}$ and $W^{k+1}$ are the optimal solutions of $X_{\rm in}$- and $W$-subproblems in (\ref{XW-k-th}), respectively, by Proposition \ref{Prop-AB3}, we have
 \begin{equation}\label{KKT-Condition}
 \left\{
 \begin{array}{ll}
 \tau_X\big(X^{k}_{\rm in}-X_{\rm in}^{k+1}\big)+\hat T^k+\rho H\big(W^k-W^{k+1}\big)\in\partial f(X^{k+1}_{\rm in})\\
 \tau_W\big(W^{k}-W^{k+1}\big)-H^*\hat T^k+\rho H^*\big(X_{\rm in}^k-X_{\rm in}^{k+1}\big)= \nabla g(W^{k+1}),
\end{array}
 \right.
 \end{equation}
 where $\hat T^k=T^k-\rho \big(X^{k+1}-HW^{k+1}\big)$. Consequently, since $\Theta^\diamond\in \Xi$, by Proposition \ref{Monoto-Prop}, we have
 $$
 \left\{
 \begin{array}{l}
 \big\langle X^{k+1}_{\rm in}-X^\diamond_{\rm in}, \tau_X\big(X^{k}_{\rm in}-X_{\rm in}^{k+1}\big)+\hat T^k+\rho H\big(W^k-W^{k+1}\big)- T^\diamond\big\rangle_R\geq 0,\\
 \big\langle W^{k+1}- W^\diamond, \tau_W\big(W^{k}-W^{k+1}\big)-H^*\hat T^k+\rho H^*\big(X_{\rm in}^k-X_{\rm in}^{k+1}\big)+H^* T^\diamond\big\rangle_R\geq 0,\\
  \end{array}
 \right.
 $$
 which implies, by summing the above inequality, that
 \begin{equation}\label{Con-Lem-e1}
 \begin{array}{l}
 \big\langle X^{k+1}_{\rm in}-X^\diamond_{\rm in}, \tau_X\big(X^{k}_{\rm in}-X_{\rm in}^{k+1}\big)+\hat T^k+\rho H\big(W^k-W^{k+1}\big)- T^\diamond\big\rangle_R\\
 + \big\langle W^{k+1}-W^\diamond, \tau_W\big(W^{k}-W^{k+1}\big)-H^*\hat T^k+\rho H^*\big(X_{\rm in}^k-X_{\rm in}^{k+1}\big)+H^*T^\diamond\big\rangle_R\geq 0.
 \end{array}
  \end{equation}
  It is obvious that
 $$
  \begin{array}{l}
  \big\langle X^{k+1}_{\rm in}-X^\diamond_{\rm in}, \tau_X\big(X^{k}_{\rm in}-X_{\rm in}^{k+1}\big)\big\rangle_R+\big\langle W^{k+1}-W^\diamond, \tau_W\big(W^{k}-W^{k+1}\big)\big\rangle_R\\
  =
 \big\langle M^{k+1}- M^\diamond, {\rm diag}(\tau_X I,\tau_W I)\big(M^{k}-M^{k+1}\big)\big\rangle_R
  \end{array}
  $$
and
$$
\big\langle X^{k+1}_{\rm in}-X^\diamond_{\rm in}, \hat T^k-T^\diamond\big\rangle_R\\
 -\big\langle W^{k+1}-W^\diamond, H^*\big(\hat T^k-T^\diamond\big)\big\rangle_R=\big\langle N\big(M^{k+1}-M^\diamond\big), \hat T^k-T^\diamond\big\rangle_R,
 $$
 where $N:=[I_n,-H]$ and $M^\diamond:=[X^\diamond_{\rm in};W^\diamond]$. Since $X_{\rm in}-HW=NM$ for any $M:=[X_{\rm in};W]$, we have $H(W^k-W^{k+1})=(X_{\rm in}^k-X_{\rm in}^{k+1})-N(M^k-M^{k+1})$ and $H(W^{k+1}-W^\diamond)=(X_{\rm in}^{k+1}-X^\diamond_{\rm in})-N(M^{k+1}-M^\diamond)$. Consequently, it holds that
$$
\begin{array}{l}
\big\langle X^{k+1}_{\rm in}-X^\diamond_{\rm in}, H\big(W^k-W^{k+1}\big)\big\rangle_R+\big\langle W^{k+1}-W^\diamond,H^*\big(X_{\rm in}^k-X_{\rm in}^{k+1}\big)\big\rangle_R\\
=\big\langle X^{k+1}_{\rm in}-X^\diamond_{\rm in}, X^k-X^{k+1}\big\rangle_R-\big\langle X^{k+1}_{\rm in}-X^\diamond_{\rm in}, N\big(M^k-M^{k+1}\big)\big\rangle_R\\
~~+\big\langle H\big(W^{k+1}-W^\diamond\big),H\big(W^k-W^{k+1}\big)\big\rangle_R+\big\langle H\big(W^{k+1}-W^\diamond\big),N\big(M^k-M^{k+1}\big)\big\rangle_R.
\end{array}
$$
Hence (\ref{Con-Lem-e1}) can be rewritten as
 \begin{equation}\label{Con-Lem-e2}
 \begin{array}{l}
 \big\langle M^{k+1}-M^\diamond, {\rm diag}(\tau_X I,\tau_W I)\big(M^{k}-M^{k+1}\big)\big\rangle_R+\big\langle N\big(M^{k+1}-M^\diamond\big), \hat T^k-T^\diamond\big\rangle_R\\
 +\rho\big\langle X^{k+1}_{\rm in}-X^\diamond_{\rm in}, X^k-X^{k+1}\big\rangle_R+\rho\big\langle H\big(W^{k+1}-W^\diamond\big),H\big(W^k-W^{k+1}\big)\big\rangle_R\\
 \geq \rho\big\langle N\big(M^{k+1}-M^\diamond\big),N\big(M^k-M^{k+1}\big)\big\rangle_R.
 \end{array}
  \end{equation}
 Note that $N(M^{k+1}-M^\diamond)=\frac{1}{\gamma \rho}\big(T^k-T^{k+1}\big)$ from $N M^\diamond=0$ and
 $$
 \hat T^k-T^\diamond=\big(\hat T^k-T^{k+1}\big)+(T^{k+1}-T^\diamond)=\frac{\gamma-1}{\gamma}\big(T^k-T^{k+1}\big)+(T^{k+1}-T^\diamond).
 $$
 By (\ref{Con-Lem-e2}), we know
$$
 \begin{array}{l}
\displaystyle\frac{1}{\gamma \rho}\big\langle T^{k}-T^{k+1}, \hat T^{k+1}-T^\diamond\big\rangle_R +\big\langle M^{k+1}-M^\diamond, {\rm diag}(\tau_X I,\tau_W I)\big(M^{k}-M^{k+1}\big)\big\rangle_R\\
 +\big\langle X^{k+1}_{\rm in}-X^\diamond_{\rm in},\rho I\big(X^{k}_{\rm in}-X^{k+1}_{\rm in}\big)\big\rangle_R+\big\langle W^{k+1}-W^\diamond,\rho H^*H\big(W^k-W^{k+1}\big)\big\rangle_R\\
 \geq \displaystyle\frac{1-\gamma}{\gamma^2\rho}\|T^k-T^{k+1}\|_F^2+\frac{1}{\gamma}\big\langle T^k-T^{k+1},N\big(M^k-M^{k+1}\big)\big\rangle_R.
 \end{array}
  $$
 or more compactly,
\begin{equation}\label{Con-Lem-e3}
 \begin{array}{l}
\displaystyle\big\langle \Theta^{k+1}-\Theta^\diamond, Q\big(\Theta^{k}-\Theta^{k+1}\big)\big\rangle_R
 \geq \displaystyle\frac{1-\gamma}{\gamma^2\rho}\|T^k-T^{k+1}\|_F^2+\frac{1}{\gamma}\big\langle T^k-T^{k+1},N\big(M^k-M^{k+1}\big)\big\rangle_R.
 \end{array}
  \end{equation}
 Since $\|\Theta^k-\Theta^\diamond\|_Q^2-\|\Theta^{k+1}-\Theta^\diamond\|_Q^2=2\big\langle \Theta^{k+1}-\Theta^\diamond, Q\big(\Theta^{k}-\Theta^{k+1}\big)\big\rangle_R+\|\Theta^k-\Theta^{k+1}\|_Q^2$ from Proposition \ref{PthGLS}, using  the above inequality (\ref{Con-Lem-e3}) yields (\ref{Lemma-1-e1}) immediately.
\qed\end{proof}

If we choose the positive parameters $\tau_X, \tau_W, \gamma,\rho$ satisfying $\gamma(\tau_X+\rho)>1$, $\gamma\tau_W>1$ and $2-\gamma-2\gamma \rho>0$, then by Proposition \ref{P-Positive-Definite}, we know that the quaternion Hermitian matrix $P$ is positive definite, which implies there exists $\kappa>0$ such that $\|\Theta^k-\Theta^{k+1}\|_P^2\geq \kappa\|\Theta^k-\Theta^{k+1}\|_F^2$. Consequently, by Lemma \ref{MPMonoto}, we have $\|\Theta^k-\Theta^\diamond\|_Q^2-\|\Theta^{k+1}-\Theta^\diamond\|_Q^2\geq \kappa\|\Theta^k-\Theta^{k+1}\|_F^2$, which implies the iterative sequence $\{\Theta^k\}_{k=0}^\infty$ is Fej\'{e}r monotone with respect to $\Xi$. See Definition 5.1 in \cite{BC10}.

\begin{theorem}
Let $\big\{\Theta^k\big\}_{k=0}^\infty$ be the sequence generated by Algorithm \ref{alg-2} from any initial point. If the positive parameters $\tau_X, \tau_W, \gamma,\rho$ satisfy $\gamma(\tau_X+\rho)>1$, $\gamma\tau_W>1$ and $2-\gamma-2\gamma \rho>0$, then the sequence $\{\Theta^k\}_{k=0}^\infty$ converges a KKT pair $\Theta^\star$ of (\ref{DQEOptim-3ACR}), i.e., $\|\Theta^k-\Theta^\star\|_F\rightarrow 0$ for some $\Theta^\star\in \Xi$.
\end{theorem}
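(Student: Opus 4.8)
The plan is to deduce the theorem from the Fej\'er monotonicity already established via Lemma~\ref{MPMonoto} and Proposition~\ref{P-Positive-Definite}, combined with a limiting argument applied to the subproblem optimality conditions \eqref{KKT-Condition}. First I would spell out the consequences of Fej\'er monotonicity. Since the stated parameter restrictions make $P$ positive definite, there is $\kappa>0$ with $\|\Theta^k-\Theta^{k+1}\|_P^2\ge\kappa\|\Theta^k-\Theta^{k+1}\|_F^2$, so Lemma~\ref{MPMonoto} yields, for every $\Theta^\diamond\in\Xi$,
\[
\|\Theta^k-\Theta^\diamond\|_Q^2-\|\Theta^{k+1}-\Theta^\diamond\|_Q^2\;\ge\;\kappa\,\|\Theta^k-\Theta^{k+1}\|_F^2\;\ge\;0.
\]
Consequently $\{\|\Theta^k-\Theta^\diamond\|_Q\}$ is nonincreasing, hence convergent; the sequence $\{\Theta^k\}$ is bounded in the $Q$-norm and thus (since $Q$ is positive definite, so $\|\cdot\|_Q$ and $\|\cdot\|_F$ are equivalent) in the Frobenius norm; and telescoping the inequality over $k=0,\dots,N$ and letting $N\to\infty$ gives $\sum_{k=0}^{\infty}\|\Theta^k-\Theta^{k+1}\|_F^2<\infty$, whence $\|\Theta^k-\Theta^{k+1}\|_F\to0$. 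Boundedness produces a subsequence $\Theta^{k_i}\to\Theta^\star=(X^\star_{\rm in},W^\star,T^\star)$.

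The core step is to show $\Theta^\star\in\Xi$ by passing to the limit in \eqref{KKT-Condition} along the indices $k=k_i-1$. From the multiplier update \eqref{Update-T-k} one has $X^{k+1}_{\rm in}-HW^{k+1}=\frac{1}{\gamma\rho}(T^k-T^{k+1})\to0$, which immediately gives primal feasibility $X^\star_{\rm in}-HW^\star=0$ and, since $\hat T^k=T^k-\rho(X^{k+1}_{\rm in}-HW^{k+1})$, also $\hat T^{k_i-1}\to T^\star$. Because $\|\Theta^k-\Theta^{k+1}\|_F\to0$, each correction term $\tau_X(X^{k}_{\rm in}-X^{k+1}_{\rm in})$, $\rho H(W^k-W^{k+1})$, $\tau_W(W^{k}-W^{k+1})$, $\rho H^*(X^k_{\rm in}-X^{k+1}_{\rm in})$ tends to $0$; hence the first relation of \eqref{KKT-Condition} passes to the limit as $T^\star\in\partial f(X^\star_{\rm in})$ and the second as $-H^*T^\star=\nabla g(W^\star)$. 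For the first I need the closed-graph (outer semicontinuity) property of the quaternion nuclear-norm subdifferential $\partial f$ with $f=\alpha\|\cdot\|_\circ$, which follows by taking limits in the defining inequality $f(X')-f(X^{k_i}_{\rm in})-\langle Z^{k_i},X'-X^{k_i}_{\rm in}\rangle_R\ge0$, using continuity of a finite-valued convex function on $\mathbb{Q}^{n\times p}$ and of $\langle\cdot,\cdot\rangle_R$; for the second I use that $\nabla g(W)=G^*(GW+D)$ is affine, hence continuous (Proposition~\ref{Prop-AB3}). Therefore $\Theta^\star\in\Xi$.

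Finally I would invoke the first step once more, now with $\Theta^\diamond=\Theta^\star$: the sequence $\{\|\Theta^k-\Theta^\star\|_Q\}$ is nonincreasing while its subsequence $\{\|\Theta^{k_i}-\Theta^\star\|_Q\}$ tends to $0$, so $\|\Theta^k-\Theta^\star\|_Q\to0$ for the whole sequence, and equivalence of norms gives $\|\Theta^k-\Theta^\star\|_F\to0$. This is precisely the standard convergence principle for Fej\'er monotone sequences possessing a cluster point in the target set (cf.\ \cite{BC10}). I expect the main obstacle to be the limiting argument that certifies $\Theta^\star\in\Xi$ --- in particular, carefully shifting the index in \eqref{KKT-Condition} to $k=k_i-1$ so that every residual term is absorbed by $\|\Theta^k-\Theta^{k+1}\|_F\to0$, and justifying the closed-graph property of $\partial(\alpha\|\cdot\|_\circ)$ on $\mathbb{Q}^{n\times p}$; the rest is routine bookkeeping.
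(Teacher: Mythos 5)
Your proposal is correct and follows essentially the same route as the paper's proof: Fej\'er monotonicity of $\{\|\Theta^k-\Theta^\diamond\|_Q\}$ from Lemma \ref{MPMonoto} together with positive definiteness of $P$ and $Q$, yielding $\|\Theta^k-\Theta^{k+1}\|_F\to 0$ and boundedness, then passage to the limit in the subproblem optimality conditions \eqref{KKT-Condition} along a convergent subsequence to certify the cluster point lies in $\Xi$, and finally reapplying the monotonicity with $\Theta^\diamond=\Theta^\star$ to upgrade subsequential to full convergence. The only cosmetic differences are your index shift to $k=k_i-1$ (the paper uses $k=k_i$, which works equally well since $\Theta^{k_i+1}\to\Theta^\star$) and your explicit justification of the closed-graph property of $\partial(\alpha\|\cdot\|_\circ)$, which the paper leaves implicit.
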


\begin{proof}
For given parameters $\tau_X, \tau_W, \gamma,\rho$, it is obvious that $Q$ and $P$ are positive definite, which implies, together with Lemma \ref{MPMonoto}, that the error $\|\Theta^k-\Theta^\diamond\|_Q^2$ is monotonically non-increasing and thus converging, as well as $\|\Theta^k-\Theta^{k+1}\|_F^2\rightarrow 0$.

Moreover, by Lemma \ref{MPMonoto}, we know $\|\Theta^k-\Theta^\diamond\|_Q^2\leq \|\Theta^{k-1}-\Theta^\diamond\|_Q^2\leq \ldots\leq \|\Theta^0-\Theta^\diamond\|_Q^2$, which means that $\{\Theta^k\}_{k=0}^\infty$ is bounded, and hence a cluster point of $\{\Theta^k\}_{k=0}^\infty$ exists. Let $\Theta^\star$ be a cluster point of the sequence $\{\Theta^k\}_{k=0}^\infty$, and let $\{\Theta^{k_i}\}_{k_i=1}^\infty$ be a convergent subsequence such that $\lim_{i\rightarrow\infty} \Theta^{k_i}=\Theta^\star$. Since $X^{k_i+1}_{\rm in}$ and $W^{k_i+1}$ are the optimal solution of the related $X_{\rm in}$- and $W$-subproblems with $k=k_i$, respectively, by Proposition \ref{Prop-AB3}, it holds that
\begin{equation}\label{ki-KKT-Condition}
 \left\{
 \begin{array}{ll}
 \tau_X\big(X^{k_i}_{\rm in}-X_{\rm in}^{k_i+1}\big)+\hat T^{k_i}+\rho H\big(W^{k_i}-W^{k_i+1}\big)\in\partial f(X^{k_i+1}_{\rm in})\\
 \tau_W\big(W^{k_i}-W^{k_i+1}\big)-H^*\hat T^{k_i}+\rho H^*\big(X_{\rm in}^{k_i}-X_{\rm in}^{k_i+1}\big)= \nabla g(W^{k_i+1}).
\end{array}
 \right.
 \end{equation}
Since $\|\Theta^{k_i}-\Theta^{k_i+1}\|_F\rightarrow 0$ which implies $\|T^{k_i}-T^{k_i+1}\|_F\rightarrow 0$, from (\ref{Update-T-k}), we know that $X^{k_i+1}-HW^{k_i+1}\rightarrow 0$ as $k_i\rightarrow\infty$, and hence $\hat T^{k_i}\rightarrow T^\star$ from the definition of $\hat T^{k_i}$. Consequently, from (\ref{ki-KKT-Condition}) and $\|\Theta^{k_i}-\Theta^{k_i+1}\|_F\rightarrow 0$, we know $T^\star\in \partial f(X^{\star}_{\rm in})$ and $-H^*\hat T^{\star}= \nabla g(W^{\star})$ as well as $X^{\star}-HW^{\star}=0$, which means $\Theta^{\star}\in \Xi$. Consequently, by Lemma \ref{MPMonoto} again, we know $\|\Theta^k-\Theta^\star\|_Q^2\geq \|\Theta^{k+1}-\Theta^\star\|_Q^2$, which implies that $\|\Theta^k-\Theta^\star\|_Q\rightarrow 0$, or equivalently, $\|\Theta^k-\Theta^\star\|_F\rightarrow 0$. We complete the proof.
\qed\end{proof}

\section{Numerical experiments}\label{NumTest}
		
In this section, we aim to conduct the performance of two proposed algorithms on synthetic data and color images. For brevity, we denote Algorithms \ref{PrOalg-1} and \ref{alg-2} as Alg1 and Alg2, respectively. For both synthetic data and color images scenarios, we consider three different cases: (i) $m<n$ and ${\rm rank(A_{\rm st})} = m $; (ii) $m>n$ and ${\rm rank}(A_{\rm st})=n$; (iii) $m>n$ and ${\rm rank}(A_{\rm st})<n$. Obviously, cases (ii) means that the problem (\ref{DQEOptim-1}) has a unique closed-form solution, whereas in cases (i) and (iii), the problem (\ref{DQEOptim-1}) has infinitely many solutions as ${\rm rank}(A_{\rm st})$ is not full column rank, see (\ref{Step1-X_st-solution}) and (\ref{Step1-W_st-solution}). So cases (i) and (iii) are more difficult to solve than (ii). For the case (iii), we first produce a quaternion matrix $A_{\rm st}^{\rm temp}\in\mathbb{Q}^{m\times n}$ with ${\rm rank}(A_{\rm st}^{\rm temp})=n$, and then keep only its first ${\rm round}(n/1.2)$ (In Matlab) singular values, resulting in a matrix $A_{\rm st}$ with no full column rank.  Furthermore, due to the wide applications of the low-rank properties, we here consider $f(\cdot)$ in problem (\ref{DQEOptim-S2}) as $\alpha\|\cdot\|_*$ in the following numerical experiments, where $\alpha> 0$.

Throughout this section, we set the $\varepsilon = 10^{-9}$ in two algorithms and the maximum iteration is 200. For the parameters emerged in Alg1 and Alg2, we set $\tau=1$ for Alg1, and $\alpha = 10^{-7}$, $\tau_X=1, \tau_W=0.6, \rho=2.5,\gamma=0.5$ for Alg2. 
And we define
\begin{align}\label{RSE-define}
	{\rm Objst} = ~\displaystyle\|A_{\rm st}X_{\rm st}^\diamond-B_{\rm st}\|_F~~~~~{\rm and}~~~~~
	{\rm Objin} = ~\displaystyle\|A_{\rm st}X_{\rm in}^\diamond+A_{\rm in}X_{\rm st}^\diamond-B_{\rm in}\|_F
\end{align}
to measure the quality of the solution $X_{\rm st}^\diamond$ and $X_{\rm in}^\diamond$.
All experiments were conducted on a laptop computer with Inter (R) core (TM) i7-7500 CPU @ 2.70GHz and 8HG memory.	

\subsection{Synthetic data}
Firstly, we compare Alg1 to the existing work \cite{WCW23}  (denote WCW's solution) in solving dual matrix least square problem, i.e., the sepecial case of (6) when $A,X,B$ are dual matrix. We generate $A_{\rm st},A_{\rm in}\in\mathbb{R}^{m\times n}$ and $X_{\rm st},X_{\rm in}\in\mathbb{R}^{n\times p}$ with the entries being random samples drawn from a Gaussian distribution, and we simplely take $n=200$ and $p=1$. Then, we generate $B_{\rm st} = A_{\rm st}X_{\rm st}$ and $B_{\rm in} = A_{\rm st}X_{\rm in}+A_{\rm in}X_{\rm st}$, respectively. The numerical results including Objst, Objin, Iteration (Iter for short) and computing time in seconds (Time for short) are summarized in Table \ref{compared-alg}. It is not difficult to see that Alg1 and WCW's solution can achieve satisfied results for case (i) and (ii). However, in case (iii), our Alg1 outperforms WCW's solution greatly in terms of Objin. This is because when $A_{\rm st}$ is not column full rank, two free variables $Z$ and $U$ will emerge, as shown in (\ref{Step1-X_st-solution}) and (\ref{Step1-W_st-solution}). In these cases, the different free variable $Z$ will impact the quality of $X_{\rm in}$. WCW's solution does not take into account the optimal case of the free variable $Z$, so the quality of Objin is relative low in scenario (iii). Therefore, the reliability of the Alg1 in solving different scenarios of $A_{\rm st}$ can be demonstrated.

\begin{table}
	\centering
	\caption{Computational results of Alg1 and WCW method on synthetic data}\label{compared-alg}
	\small\begin{tabular}{c c c c c c c c c c c c c c c c c}\toprule
		\multirow{2}{*}{case}&\multicolumn{1}{c}{\multirow{2}{*}{$(m,n)$}}&\multicolumn{4}{c}{Alg1}&&\multicolumn{4}{c}{WCW's solution}&\\
		\cmidrule{3-7}\cmidrule{8-12}
		&&Objst&Objin&Iter&Time&&Objst&Objin&Iter&Time  \\
		\hline
		
		\multirow{2}{*}{(i)}&\multicolumn{1}{c}{\multirow{1}{*}{$(150,200)$}}&4.7e-13&8.8e-13&10 &0.030 &&3.6e-13&5.1e-13&/ & 0.0031\\	
		&\multicolumn{1}{c}{\multirow{1}{*}{$(180,200)$}}  &7.3e-13&4.6e-12&18 &0.031 &&5.2e-13&7.4e-13&/ & 0.0049\\
		\hline
	
		\multirow{2}{*}{(ii)}&\multicolumn{1}{c}{\multirow{1}{*}{$(250,200)$}}	&6.3e-13&6.3e-10&12 &0.033&&5.0e-13& 8.5e-13&/ &0.0048\\	
		&\multicolumn{1}{c}{\multirow{1}{*}{$(300,200)$}}  &9.2e-13&2.1e-11&10 &0.024  &&7.7e-13&1.0e-12&/ & 0.0055\\
		\hline
	
		\multirow{2}{*}{(iii)}&\multicolumn{1}{c}{\multirow{1}{*}{$(250,200)$}}  &7.4e-13&1.4e-09&10 &0.026 &&5.5e-13&5.6e+01&/ &0.0042\\
		&\multicolumn{1}{c}{\multirow{1}{*}{$(300,200)$}}  &8.6e-13&9.0e-13&10 & 0.028 &&7.0e-13&8.3e+01&/ &0.0049 \\
		\toprule
	\end{tabular}
\end{table}

%
%
%

\begin{table}
\centering
	\caption{Computational results of two algorithms on synthetic data}\label{synthetic-low-rank}
	\small\begin{tabular}{cc c c c c c c c c c c c c c c c}\toprule
		\multirow{2}{*}{case}&\multicolumn{1}{c}{\multirow{2}{*}{$m$}}&\multicolumn{4}{c}{Alg1}&&\multicolumn{4}{c}{Alg2}&\\
		\cmidrule{3-7}\cmidrule{8-12}
		&&Objst&Objin&Iter&Time&&Objst&Objin&Iter&Time  \\
		\hline
		
		\multirow{2}{*}{(i)}&\multicolumn{1}{c}{\multirow{1}{*}{$150$}}	&5e-10&1e-09&10&1.62 &&4e-10&2e-07&39 &8.35\\
		&\multicolumn{1}{c}{\multirow{1}{*}{$180$}}  &1e-09&4e-07&10&1.76 &&9e-10&9e-07&40 &9.08 \\
		\hline
		
		\multirow{2}{*}{(ii)}&\multicolumn{1}{c}{\multirow{1}{*}{$250$}}	&1e-09&2e-10&10  &1.65 &&1e-09&2e-08&42 &9.43 \\
		&\multicolumn{1}{c}{\multirow{1}{*}{$300$}}  &2e-09	&2e-10&10 &1.41 &&2e-09&1e-08&39 &9.25\\
		\hline
		
		\multirow{2}{*}{(iii)}&\multicolumn{1}{c}{\multirow{1}{*}{$250$}}  &1e-09&1e-05&12 &2.05 &&1e-09&2e-06&41 &9.22\\
		&\multicolumn{1}{c}{\multirow{1}{*}{$300$}}  &2e-09&9e-08&10 &1.54 &&1e-09&2e-07&39 &9.26\\
		\toprule
	\end{tabular}

\end{table}	

\begin{figure}
	\centering
	\includegraphics[width=0.30\textwidth]{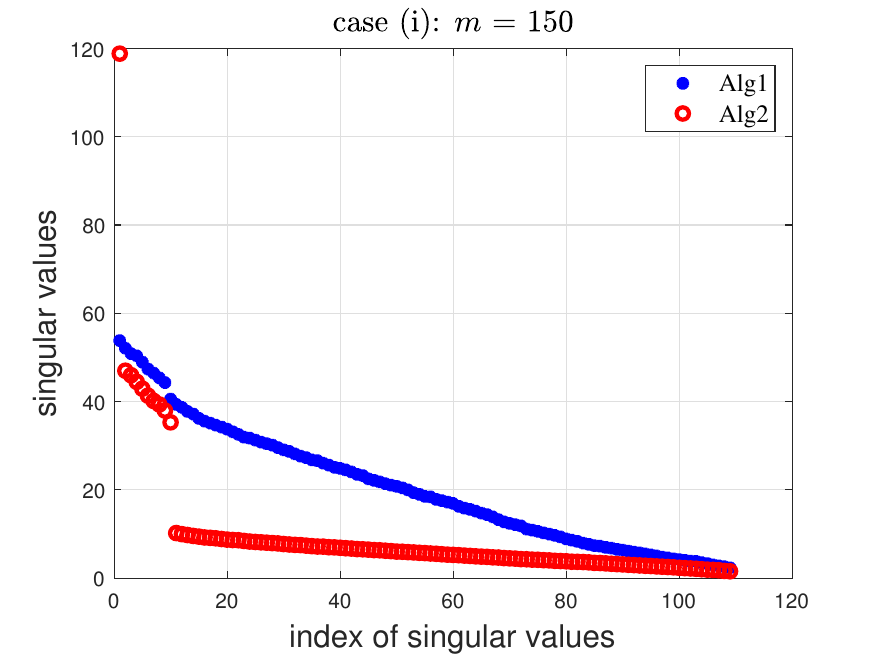}
	\includegraphics[width=0.30\textwidth]{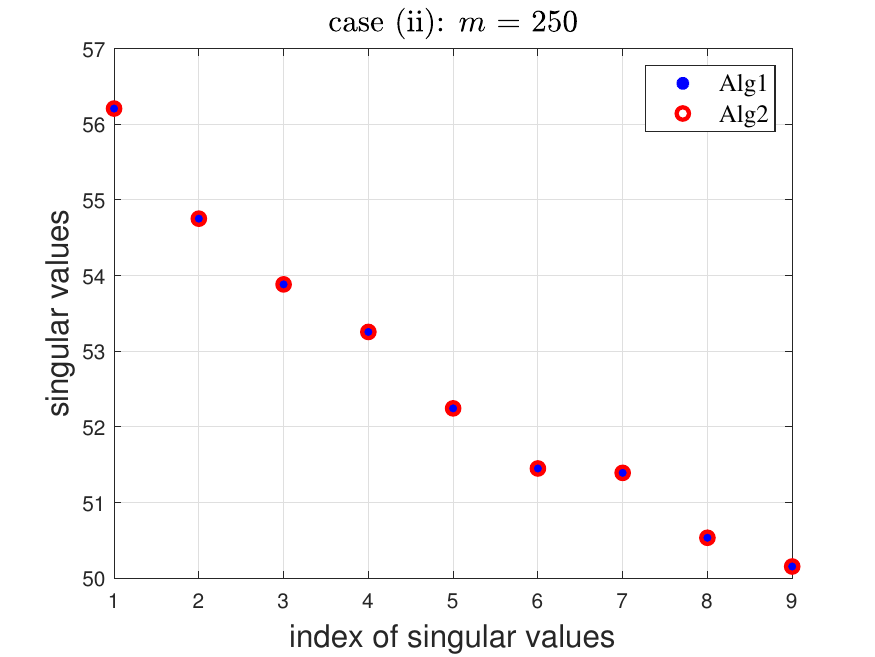}
	\includegraphics[width=0.30\textwidth]{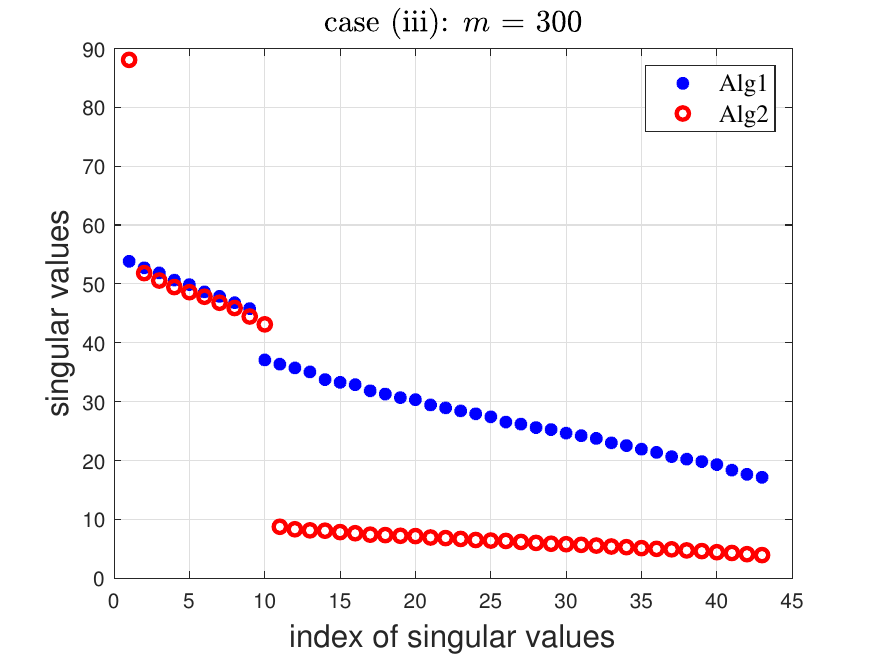}\\
	\caption{Results of singular values obtained by Alg1 and Alg2 in the synthetic scenario}\label{synthetic-test-SVD}
\end{figure}

Secondly, we conduct Alg1 and Alg2 in some complex scenarios, i.e., the problem of the low-rank approximate solutions
of the dual quaternion overdetermined equations. We randomly generate $A_{\rm st}, A_{\rm in}\in\mathbb{Q}^{m\times 200}$ and $X_{\rm st}\in\mathbb{Q}^{200\times 200}$  with the normal distribution, where $A_{\rm in}$ has only $5\%$ of non-zero elements.  In the part, we choose $m \in \{150,180,250,300\}$. For generating $X_{\rm in}$, we randomly generate a full-rank quaternion matrix $X_{\rm temp}\in\mathbb{Q}^{200\times 200}$, then keep only its maximum 10 singular values of $X_{\rm temp}$, and finally produce a quaternion matrix $X_{\rm in}$ of rank 10. The numerical results for the low-rank case including Objst, Objin, Iter and Time are summarized in Table \ref{synthetic-low-rank}, where shows that all cases can be effectively solved by Alg1 and Alg2. For detailed, Alg1 and Al2 have similar result in terms of Objst, and Alg2 has lower Objin, Iter and Time results than Alg1. However, as can be seen from Fig. \ref{synthetic-test-SVD}, Xin obtained by Alg2 has more tight singular value distribution due to the regularization term $\|\cdot\|_*$, and only focuses on the first 10 singular values.

\subsection{Color images}

Finally, we are concerned with the numerical performance of our approaches on real-world datasets. We here consider four widely used color images including `house', `sailboat' `peppers' and `baboon' (see Fig. \ref{images-test}), which are all size of $256\times 256\times 3$. The color image $\mathcal{X}\in\mathbb{R}^{n\times p\times 3}$ can be reshaped as a pure quaternion matrix $X\in\mathbb{Q}^{n\times p}$ by using the way $X=\mathcal{X}(:,:,1)\ii+\mathcal{X}(:,:,2)\jj+\mathcal{X}(:,:,3)\kk$. In this subsection, we set $X_{\rm st}, X_{\rm in}$ as color images and consider the following inverse color images problem. In detailed, we have the given quaternion matrices, $A_{\rm st}, A_{\rm in} \in\mathbb{Q}^{m\times n}$ and $ B_{\rm st}, B_{\rm in}\in\mathbb{Q}^{n\times p}$, where $A_{\rm st}$ and $A_{\rm in}$ are encryption matrices (i.e., secret key), and $B_{\rm st}$ and $B_{\rm in}$ (see Fig. \ref{images-test-2} for example) are regarded as the sent information by encrypting the real image information, i.e., $X_{\rm st}$ and $X_{\rm in}$, through the encryption matrices $A_{\rm st}$ and $A_{\rm in}$. Our goal is to estimate a satisfied $X_{\rm st}^\diamond$ and a $X_{\rm in}^\diamond$ with the given information $A_{\rm st}, A_{\rm in}, B_{\rm st}$ and $B_{\rm in}$, which could be reduced to the problem (\ref{DQEOptim-1}).

For specific experimental settings, the way to generate $A_{\rm st},A_{\rm in}\in\mathbb{Q}^{m\times 256}$ and $B_{\rm st}, B_{\rm in}\in\mathbb{Q}^{256\times 256}$ is same to synthetic scenario. We choose $m\in\{180,250,300,400\}$ and test two examples: (i) sailboat ($X_{\rm st}$) + house $(X_{\rm in} $) and (ii) peppers ($X_{\rm st}$) + baboon $(X_{\rm in} $).
All results obtained by Alg1 and Alg2 are summarized in Table \ref{color-low-rank}. We find that Alg1 outperforms Alg2 in terms of Objin, Iter and Time. Due to the visuality of images, we in addition plot some visual results in Fig. \ref{visual-image1} and Fig. \ref{visual-image2}. Form these two figures, it is clearly that the $X_{\rm in}^\diamond$ obtained by Alg2 is visually better than solved by Alg1, especially in cases (i) and (iii). The reason is simply that in both cases, $X_{\rm st}^\diamond$ and $X_{\rm in}^\diamond$ have infinitely many solutions, and perhaps the solution found by Alg1 can achieve a lower results in terms of Objst and Objin, but because Alg2 takes into account the low-rank property of images, the $X_{\rm st}^\diamond$ and $X_{\rm in}^\diamond$ obtained by it will be more reliable than Alg1. Furthermore, the low-rankness of $X_{\rm in}$ obtained by Alg2 are plotted in Fig. \ref{images-test-SVD}.


\begin{figure}
	\centering
	\includegraphics[width=0.90\textwidth]{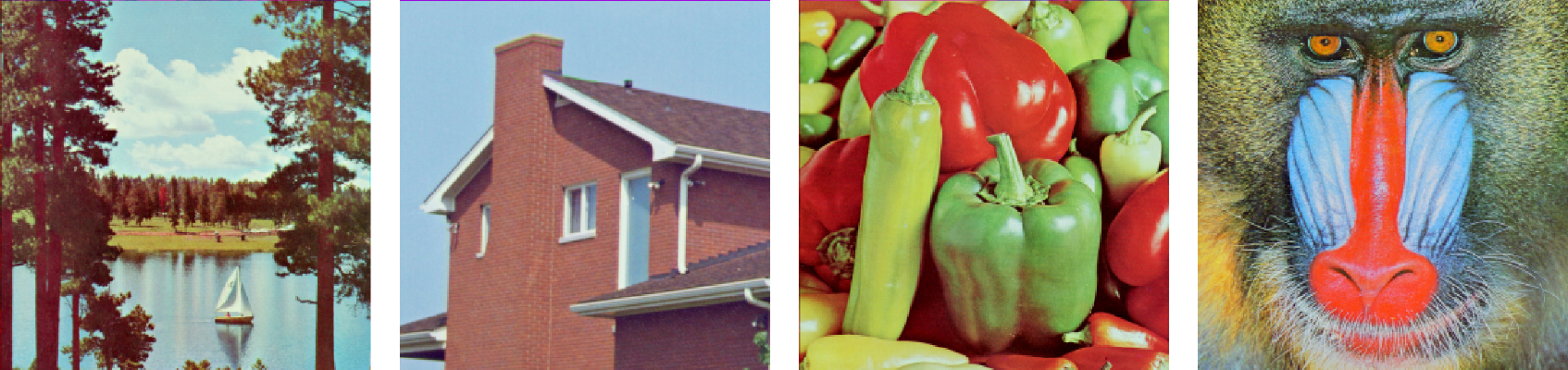}
	\caption{Four test images. From left to right: sailboat, house, peppers and baboon}\label{images-test}
\end{figure}

\begin{figure}
	\centering
	\includegraphics[width=0.9\textwidth]{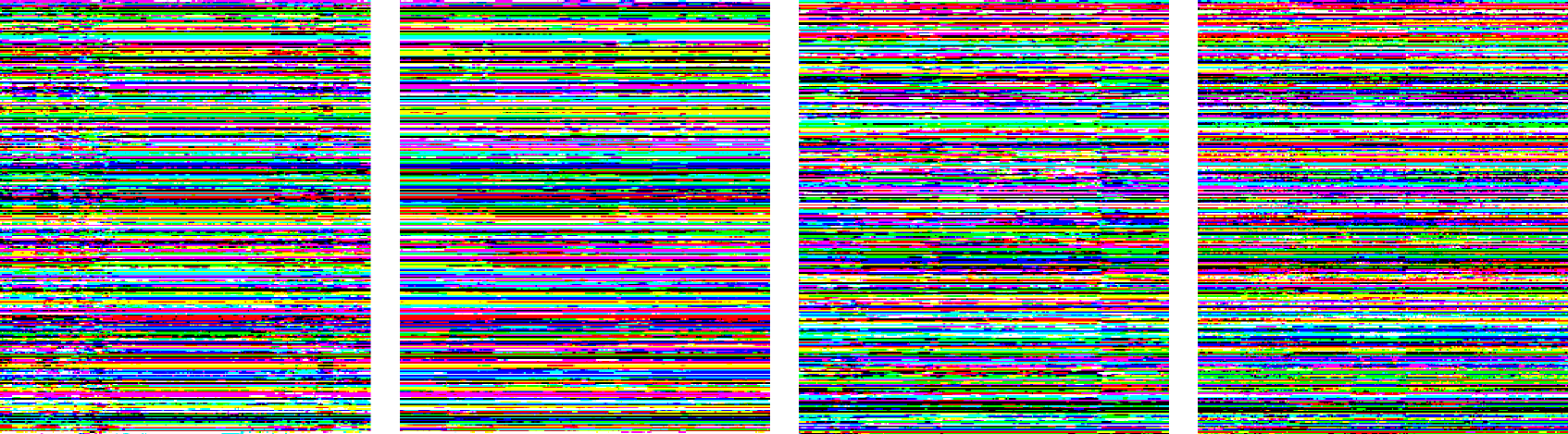}
	\caption{Visual results of $B_{\rm st}$ and $B_{\rm in}$ with ``sailboat + house'' (first two) and ``peppers + baboon'' (last two) in case (ii) and $m=300$.}\label{images-test-2}
\end{figure}

\begin{table}
\centering
	\caption{Computational results of two algorithms on inverse color images problem}\label{color-low-rank}
	\begin{center}
	{\begin{tabular}{c c c c c c c c c c c c c c c c c c c c}\toprule
	 \multirow{2}{*}{case}&\multicolumn{1}{c}{\multirow{2}{*}{$m$}}&\multicolumn{2}{c}{\multirow{2}{*}{Method}}&\multicolumn{4}{c}{sailboat~+~house}&&\multicolumn{4}{c}{peppers + baboon}\\
	\cmidrule{5-8}\cmidrule{9-13}
	&\multicolumn{2}{c}{}&&Objst&Objin&Iter&Time&&Objst&Objin&Iter &Time  \\
	\hline
		\multirow{4}{*}{(i)}&\multicolumn{1}{c}{\multirow{2}{*}{$180$}}&	\multicolumn{2}{c}{Alg1}&7e-10&4e-09&10&2.41 &&1e-09&4e-09&10&2.43 \\
		&&\multicolumn{2}{c}{Alg2}&3e-10&6e-08&43&15.12 &&5e-10&6e-08&43&15.46   \\		
		&\multicolumn{1}{c}{\multirow{2}{*}{$250$}}&	\multicolumn{2}{c}{Alg1}&1e-09&7e-07&37&9.97 &&8e-10&5e-07&33&8.71 \\
		&&\multicolumn{2}{c}{Alg2}&3e-10&2e-07&109&43.43&&3e-10&2e-07&93&37.35 \\
		\hline
		
		\multirow{4}{*}{(ii)}&\multicolumn{1}{c}{\multirow{2}{*}{$300$}}&	\multicolumn{2}{c}{Alg1}&1e-09&7e-09&10&3.05 &&1e-09&5e-09&10&3.00 \\
		&&\multicolumn{2}{c}{Alg2}&4e-10&1e-07&45&18.50 &&6e-10&1e-07&45&18.54 \\		
		&\multicolumn{1}{c}{\multirow{2}{*}{$400$}}&\multicolumn{2}{c}{Alg1}&2e-09&2e-10&11&3.46 &&2e-09&2e-10&10&3.44\\
		&&\multicolumn{2}{c}{Alg2}&6e-10&6e-08&44&19.68 &&7e-10&7e-08&44&19.12 \\
		\hline
		
		\multirow{4}{*}{(iii)}&\multicolumn{1}{c}{\multirow{2}{*}{$300$}}&	\multicolumn{2}{c}{Alg1}&1e-09&4e-07&10&3.85 &&2e-09&3e-07&18&5.28  \\
		&&\multicolumn{2}{c}{Alg2}&4e-10&8e-07&44&18.37 &&9e-10&8e-08&43&18.18 \\		
		&\multicolumn{1}{c}{\multirow{2}{*}{$400$}}&	\multicolumn{2}{c}{Alg1}&2e-09&5e-08&10&3.44 &&1e-09&4e-08&10&3.49\\
		&&\multicolumn{2}{c}{Alg2}&6e-10&7e-08&44 &19.94 &&5e-10&8e-08&44 &19.98 \\
		\toprule
	\end{tabular}}
	\end{center}
\end{table}

\begin{figure}
	\centering
	\includegraphics[width=0.90\textwidth]{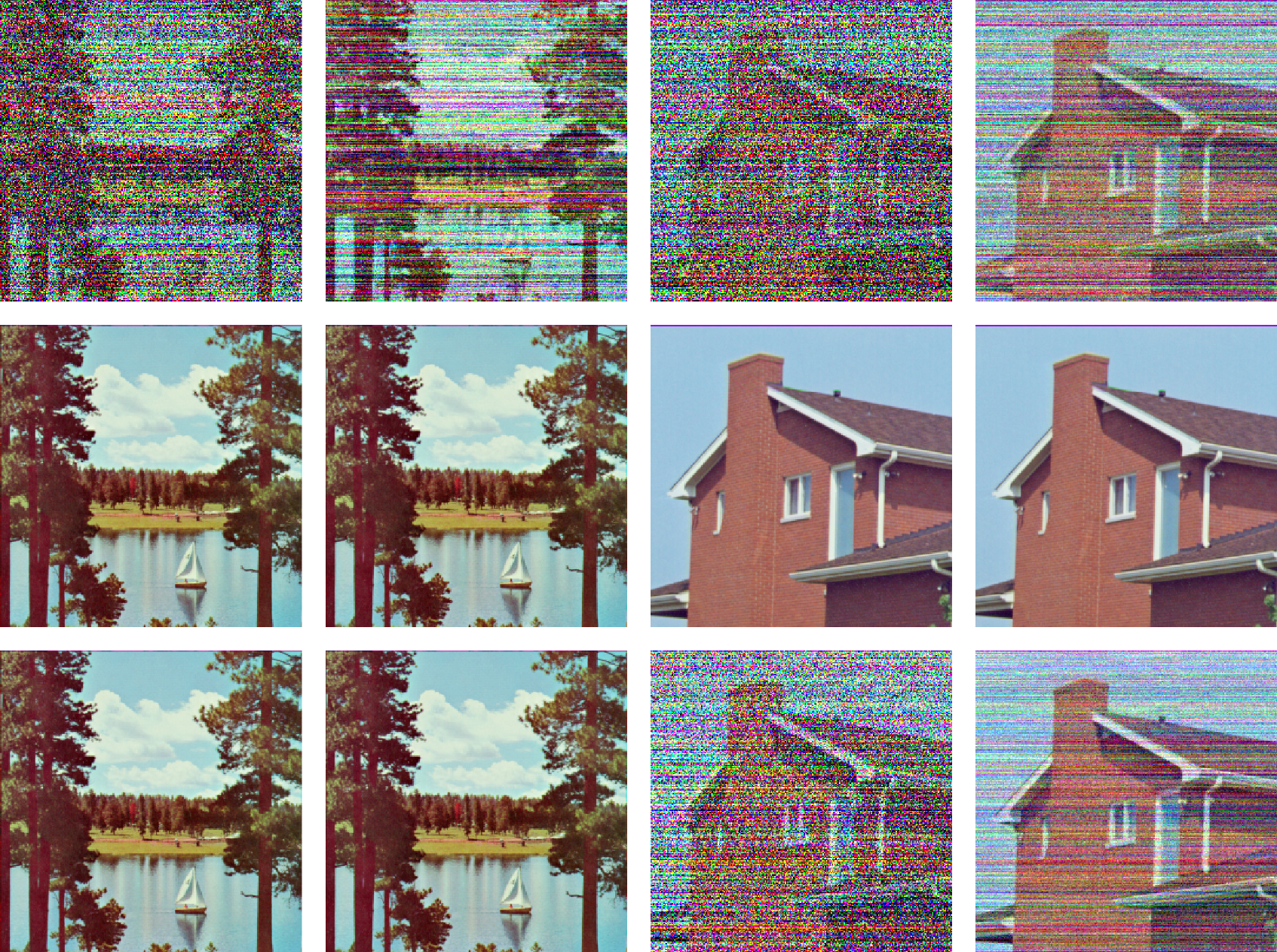}
	\caption{Visual results of $X_{\rm st}$ and $X_{\rm in}$ in ``sailboat + house''. \\ First row: case (i) with $m=180$; Second row: case (ii) with $m=300$; Third row: case (ii) with $m=400$. The first two columns are $X_{\rm st}^\diamond$ obtained by Alg1 and Alg2, and the last two columns are $X_{\rm in}^\diamond$ obtained by Alg1 and Alg2, respectively}\label{visual-image1}
\end{figure}

\begin{figure}
	\centering
	\includegraphics[width=0.90\textwidth]{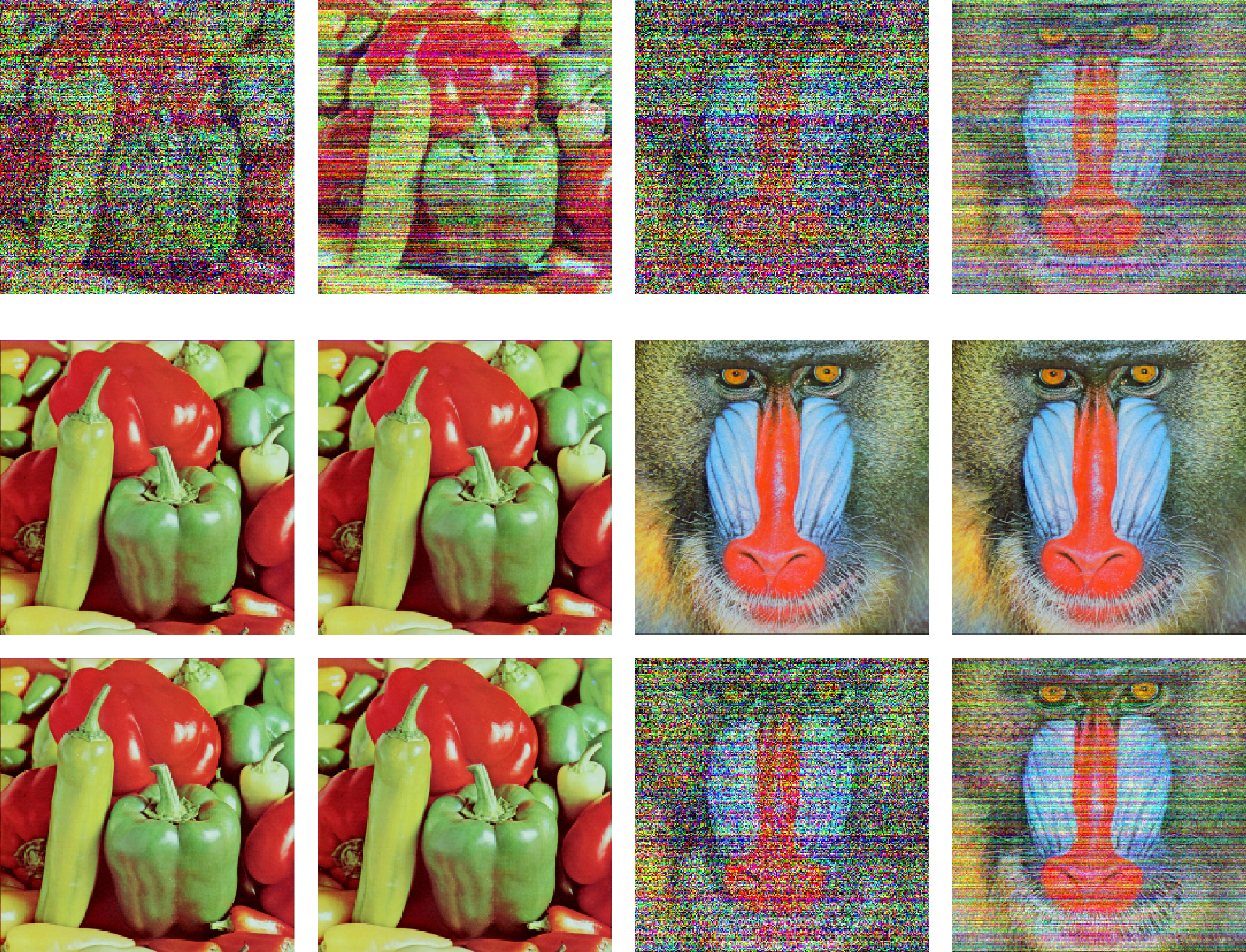}
		\caption{Visual results of $X_{\rm st}$ and $X_{\rm in}$ in ``peppers + baboon''. \\ First row: case (i) with $m=180$; Second row: case (ii) with $m=300$; Third row: case (ii) with $m=400$. The first two columns are $X_{\rm st}^\diamond$ obtained by Alg1 and Alg2, and the last two columns are $X_{\rm in}^\diamond$ obtained by Alg1 and Alg2, respectively}\label{visual-image2}
\end{figure}

\begin{figure}
	\centering
	\includegraphics[width=0.30\textwidth]{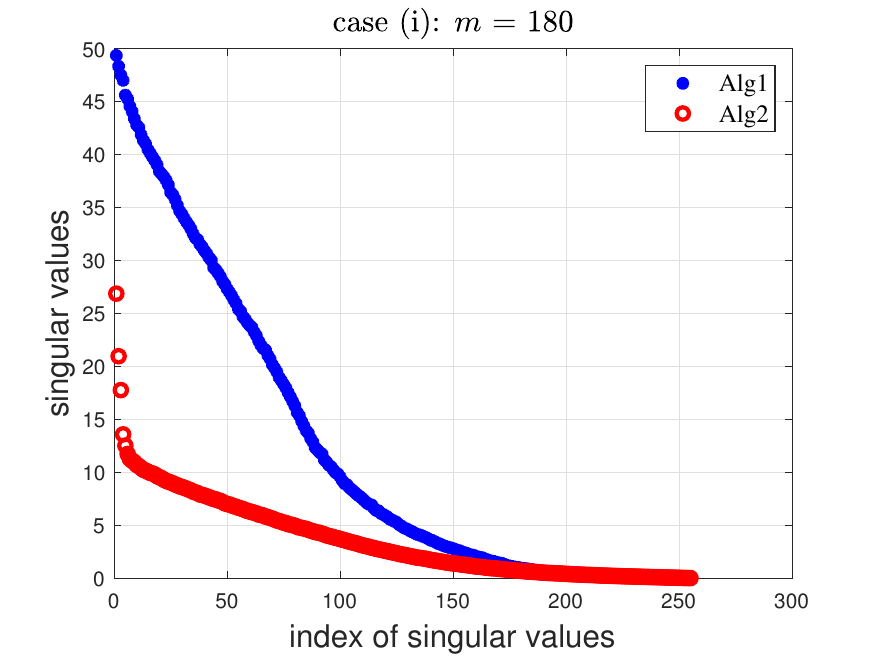}
	\includegraphics[width=0.30\textwidth]{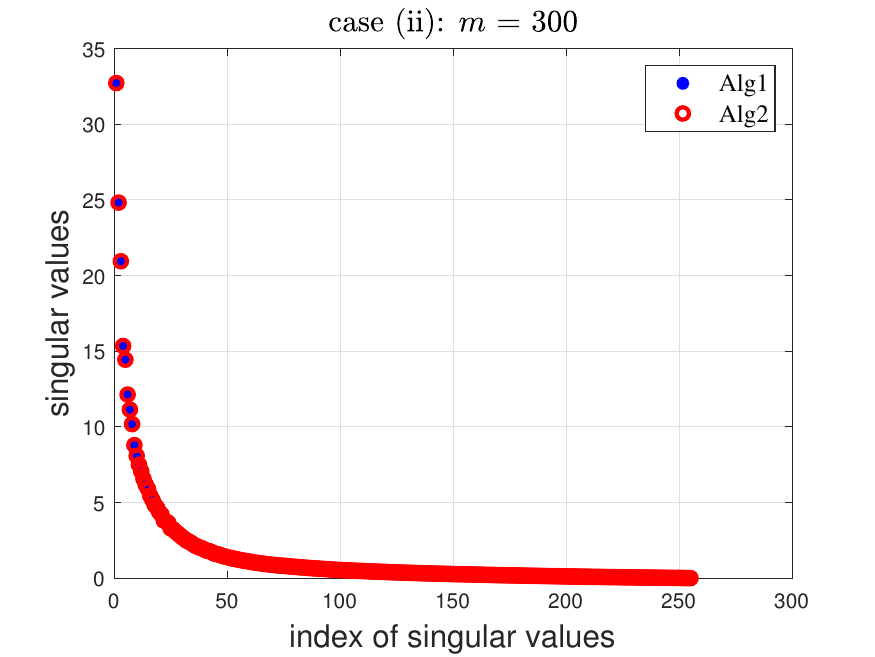}
	\includegraphics[width=0.30\textwidth]{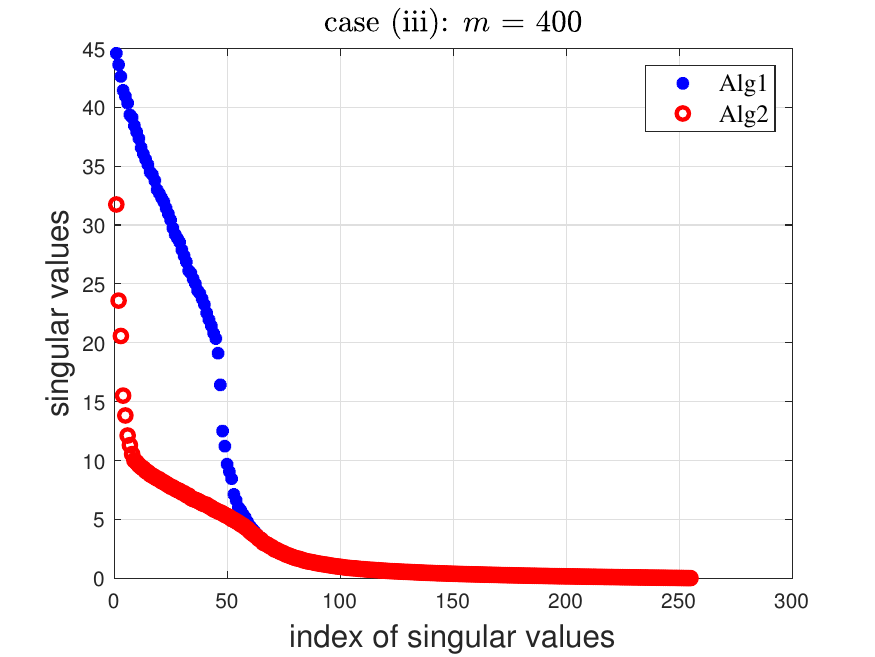}\\
	\includegraphics[width=0.30\textwidth]{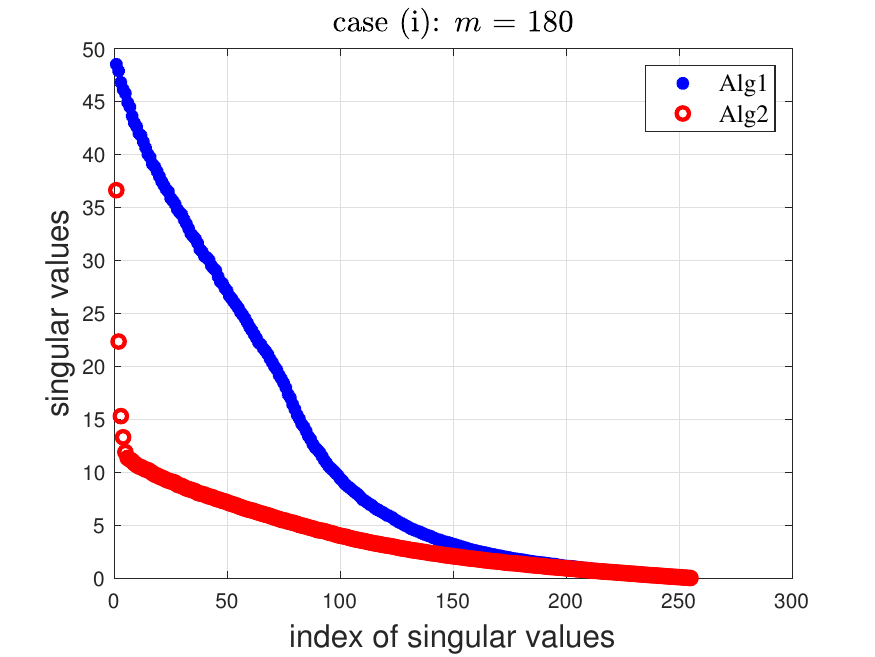}
	\includegraphics[width=0.30\textwidth]{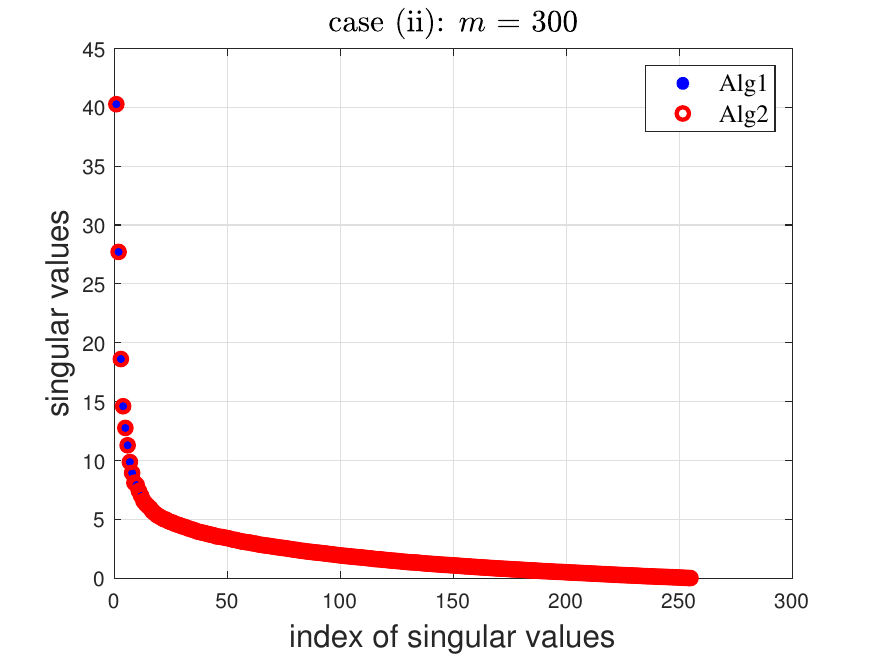}
	\includegraphics[width=0.30\textwidth]{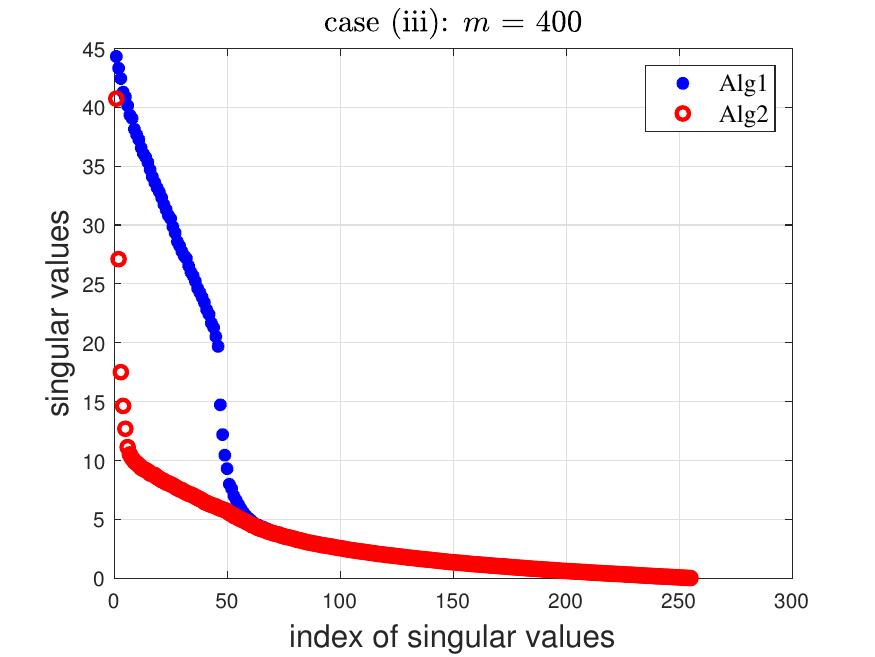}
	\caption{Results of singular values obtained by Alg1 and Alg2 in the images scenario}
\label{images-test-SVD}
\end{figure}

\section{Final remarks}\label{FinalRe}

In this paper, we proposed two approaches for solving least squares problem of dual quaternion equations, with the help of a newly introduced metric function. Firstly, the considered problems are converted into the corresponding bi-level optimization problems. Consequently, two proximal operator minimization algorithms are proposed. The first algorithm is used to solve the classical dual quaternion matrix least squares problem, while the second one is used to solve the least squares solution of the corresponding problem with regularization requirements (such as low-rankness and sparsity). Theoretically, we also showed the convergence properties of the presented algorithms. A series of numerical experiments on synthetic and color image datasets have demonstrated the effectiveness of the proposed approaches.

\bigskip
\noindent {\bf Acknowledgement}:
The authors would like to thank Dr. Hongjin He from Ningbo University for his valuable suggestions and discussions on the numerical algorithms and experiments.
The first author was supported in part by National Natural Science Foundation of China (No. 11971138).


\begin{thebibliography}{10}


\bibitem{Be09} D. S. Bernstein, ``Matrix Mathematics. Theory, Facts, and Formulas'', 2nd edition, Princeton
 University Press, Princeton, NJ, 2009.

 \bibitem{BC10} H.H. Bauschke, P.L. Combettes, ``Convex Analysis and Monotone Operator Theory in Hilbert Spaces'', 2nd Edition, Springer, 2010.


\bibitem{BK20}  G. Brambley and J. Kim, ``Unit dual quaternion-based pose optimization for visual runway observations'',
  {\sl Iet Cyber Systems and Robotics  \bf2}: 181-189 (2020).




\bibitem{BLH19} S. Bultmann, K. Li and U.D. Hanebeck, ``Stereo visual SLAM based on unscented dual quaternion filtering'', {\sl 2019 22th International Conference on Information Fusion (FUSION)}, 1-8 (2019).






\bibitem{Cl73} W.K. Clifford, ``Preliminary sketch of bi-quaternions'', {\sl Proceedings of the London Mathematical Society \bf 4}: 381-395 (1873).

\bibitem{CCHQ24}  X. Chen, C. Cui, D. Han, L. Qi, ``Non-convex pose graph optimization in SLAM  via proximal linearized Riemannian ADMM'', arXiv:2404.18560 (2024).



\bibitem{CKJC16} J. Cheng, J. Kim, Z. Jiang and W. Che, ``Dual quaternion-based graph SLAM'', {\sl Robotics and Autonomous Systems \bf 77}: 15-24 (2016).



\bibitem{CLQY24} Z. Chen, C. Ling, L. Qi and H. Yan, ``A regularization-patching dual quaternion
 optimization method for solving the hand-eye calibration problem'', {\sl Journal of Optimization Theory and Applications \bf200}: 1193-1215 (2024).


\bibitem{CQZX20} Y. Chen, L. Qi and X. Zhang, ``Color image completion using a low-rank quaternion matrix approximation'', {\sl Pacific Journal of Optimization \bf 18}: 55-75 (2022).

\bibitem{CQ24} C. Cui, L. Qi,  ``A power method for computing the dominant eigenvalue of a dual quaternion Hermitian matrix'', {\sl Journal of Scientific Computing \bf100}: No. 21 (2024).

\bibitem{CQSW24}  C. Cui, L. Qi, G. Song, Q. Wang, ``Moore determinant of dual quaternion Hermitian matrices'', {\sl Computational and Applied Mathematics \bf 43}: No. 365 (2024).

\bibitem{Da99} K. Daniilidis, ``Hand-eye calibration using dual quaternions'', {\sl The International Journal of Robotics Research \bf 18}: 286-298 (1999).


\bibitem{DLPY17} W. Deng, M.J. Lai, Z.M. Peng, W.T. Yin, ``Parallel multi-block ADMM with $o(1/k)$ convergence,'' {\sl  J Sci Comput \bf 71}: 712-736 (2017).


\bibitem{DSR23} M. Danielewski, L. Sapa, C. Roth ``Quaternion quantum mechanics II: resolving the problems of gravity and imaginary numbers.'', {\sl Symmetry \bf15}: 1672 (2023).





\bibitem{EBS14} T.A. Ell, N.L. Bihan, S.J. Sangwine, ``Quaternion Fourier Transforms for Signal and Image Processing'', Wiley-IEEE Press, 2014.

\bibitem{Fis17} I. Fischer, ``Dual-number methods in kinematics, statics and dynamics'', Routledge, Milton
 Park, UK, 2017.

 \bibitem{FJSS62} D. Finkelstein, J.M. Jauch,  S. Schiminovich, D. Speiser, ``Foundations of quaternion quantum mechanics'', {\sl Journal of Mathematical Physics \bf3}: 207-220 (1962).



\bibitem{Gan18} J. Gantner, ``On the equivalence of complex and quaternionic quantum mechanics'', {\sl Quantum Studies: Mathematics and Foundations \bf 5}: 357-390 (2018).










\bibitem{Ha43} W.R. Hamilton, ``On quaternions; or on a new system of imaginaries in algebra'', {\sl Letter to John T. Graves}, October 17, 1843.






\bibitem{JNS19} Z. Jia, M.K. Ng, G.J. Song, ``Robust quaternion matrix completion with applications to image inpainting'', {\sl Numerical Linear Algebra with Applications \bf26}: e2245 (2019).


\bibitem{Ke12} B. Kenright, ``A biginners guide to dual-quaternions'', {\sl 20th International Conference in Central Europe on Computer Graphics, Visualization and Computer Vision}, Plzen, Czech, 2012.



\bibitem{LHQ22} C. Ling, H. He and L. Qi, ``Singular values of dual quaternion matrices and their low
rank approximations'', {\sl Numerical Functional Analysis and Optimization \bf43}: 1423-1458 (2022).

\bibitem{LHQF24} C. Ling, H. He, L. Qi, and T.T. Feng, ``Spectral norm and von Neumann type
 trace inequality for dual quaternion matrices'', {\sl Pacific Journal of Optimization \bf 20}: 229-247 (2024).

\bibitem{LQY23}  C. Ling, L. Qi and H. Yan, ``Minimax principle for eigenvalues of dual quaternion
 Hermitian matrices and generalized inverses of dual quaternion matrices'', {\sl Numerical
 Functional Analysis and Optimization \bf44}: 1371-1394 (2023).







\bibitem{MKL20} J. Miao, K.I. Kou, W. Liu, ``Low-rank quaternion tensor completion for recovering color videos  and images'',  {\sl Pattern Recognition \bf 107}: 107505 (2020).


\bibitem{MKO14} G. Matsuda, S. Kaji and H. Ochiai, {\sl Anti-commutative Dual Complex Numbers and 2D Rigid Transformation} in: K. Anjyo, ed., {\sl Mathematical Progress in Expressive Image Synthesis I: Extended and Selected Results from the Symposium MEIS2013}, Mathematics for Industry, Springer, Japan, 131-138 (2014).




\bibitem{MXXLY18} Y. Min, Z. Xiong, L. Xing, J. Liu and D. Yin, ``An improved SINS/GNSS/CNS federal filter based on dual quaternions (in Chinese)'', {\sl Acta Armamentarii \bf 39}: 315-324 (2018).





\bibitem{Qi23} L. Qi, ``Standard dual quaternion optimization and its applications in hand-eye calibration and SLAM'', {\sl Communications on Applied Mathematics and Computation \bf5}: 1469-1483 (2023).

\bibitem{Qi-Cui24} L. Qi and C. Cui, ``Eigenvalues of dual Hermitian matrices with application in formation control'', to appear in: SIAM Journal on Matrix Analysis and Applications.


\bibitem{QC24} L. Qi, C. Cui and C. Ouyang, ``Dual quaternion Laplacian, directed gain graph and formation control'', arXiv:2401.05132v5 (2024).


\bibitem{QiLu23}  L. Qi and Z. Luo, ``Eigenvalues and singular values of dual quaternion matrices'', {\sl
 Pacific Journal of Optimization \bf19}: 257-272 (2023).


\bibitem{QLWZ22} L. Qi, Z. Luo, Q. Wang, X. Zhang, ``Quaternion matrix optimization: motivation and analysis'', {\sl Journal of Optimization Theory and Applications \bf 193}: 621-648 (2022).


\bibitem{QLY21} L. Qi, C. Ling and H. Yan, ``Dual quaternions and dual quaternion vectors'', {\sl Communications on Applied Mathematics and Computation \bf 4}: 1494-1508 (2022).


\bibitem{QWL23} L. Qi, X. Wang and Z. Luo, ``Dual quaternion matrices in multi-agent formation
 control'', {\sl Communications in Mathematical Sciences \bf21}: 1865-1874 (2023).




\bibitem{Sa96} S.J. Sangwine, ``Fourier transforms of colour images using quaternion or hypercomplex, numbers'', {\sl Electronics Letters \bf32}: 1979-1980 (1996).



\bibitem{TRA11} A. Torsello, E. Rodol\`{a} and A. Albarelli, ``Multiview registration via graph diffusion of dual quaternions'', in: {\sl Proc. of the XXIV IEEE Conference on Computer Vision and Pattern Recognition}, 2441-2448 (2011).


\bibitem{Wx11} X. Wang, {\sl Formation Control in Three Dimensional Space and with Nonlinear Dynamics}, Ph.D. Thesis (in Chinese), National University of Defence Technology, Changsha, China, 2011.

\bibitem{WLZZ18} M. Wei, Y. Li, F. Zhang and J. Zhao, {\sl Quaternion Matrix Computations}, Nova Science Publisher, New York, 2018.

\bibitem{WWF08} M.H. Wang, M.S. Wei, Y. Feng, ``An iterative algorithm for least squares problem in quaternionic quantum theory'', {\sl Computer Physics Communications \bf179}: 203-207 (2008).

\bibitem{WCW23} H.X. Wang, C. Cui, Y.M. Wei    ``The QLY least-squares and the QLY least-squares minimal-norm of linear dual least squares problems'',{\sl Linear and Multilinear Algebra}, https://doi.org/10.1080/03081087.2023.2223348

\bibitem{WYL12} X. Wang, C. Yu and Z. Lin, ``A dual quaternion solution to attitude and position control for rigid body coordination'', {\sl IEEE Transactions on Robotics \bf 28}: 1162-1170 (2012).





\bibitem{WYZ} X. Wang, C. Yu and Z. Zheng, ``Multiple rigid-bodies rendezvous problem based on unit dual quaternions'', manuscript.

\bibitem{WZ14} X.  Wang and H. Zhu, ``On the comparisons of unit dual quaternion and homogeneous transformation matrix'', {\sl Adv. Appl. Clifford Algebras \bf 24}: 213-229 (2014).

\bibitem{WDW24} T. Wei, W.Y. Ding, Y.M. Wei, ``Singular value decomposition of dual matrices and its application to traveling wave identification in the brain'', {\sl SIAM J. Matrix Anal. App. \bf45}: 634-660 (2024).


\bibitem{YBM08} X.P. Yun, E.R. Bachmann, R.B. McGhee, ``A simplified quaternion-based algorithm for orientation estimation from earth gravity and magnetic field measurements'', {\sl IEEE Transactions on Instrumentation and Measurement \bf57}: 638-650 (2008).

\bibitem{Zh97} F. Zhang, ``Quaternions and matrices of quaternions'', {\sl Linear Algebra and its Applications \bf 251}: 21-57 (1997).



\bibitem{ZKW16} C. Zou, K.I. Kou, Y. Wang, Y. ``Quarernion collaborative and sparse representation with application to
 color face recognition'', {\sl IEEE Trans. Image Process. \bf25}: 3287-3302 (2016).

\bibitem{ZWLZ15} F.X. Zhang, M.S. Wei, Y. Li, J.L. Zhao, ``Special least squares solutions of the quaternion matrix equation $AX=B$ with applications'', {\sl Applied Mathematics and Computation \bf270}: 425-433 (2015).
\end{thebibliography}
\end{document}